\newcommand{\dps}{\displaystyle}
\newcommand{\tps}{\textstyle}
\newtheorem{theorem}{\indent Theorem}[section]
\newtheorem{lemma}{\indent Lemma}[section]
\newtheorem{remark}{\indent Remark}[section]
\newcommand{\ba}{\begin{array}}\newcommand{\ea}{\end{array}}
\newcommand{\be}{\begin{eqnarray}}\newcommand{\ee}{\end{eqnarray}}
\newcommand{\beq}{\begin{equation*}}\newcommand{\eeq}{\end{equation*}}
\newcommand{\bex}{\begin{eqnarray*}}
\newcommand{\eex}{\end{eqnarray*}}
\def\bq{\begin{equation}}
\def\eq{\end{equation}}
\def\beq{\begin{equation*}}
\def\eeq{\end{equation*}}
\def\br{\begin{eqnarray}}
\def\er{\end{eqnarray}}
\def\brr{\bq\begin{array}{r@{}l}}
\def\err{\end{array}\eq}
\def\bry{\beq\begin{array}{r@{}l}}
\def\ery{\end{array}\eeq}
\def\brl{\bq\begin{array}{l}}
\def\erl{\end{array}\eq}
\def\bryl{\beq\begin{array}{l}}
\def\eryl{\end{array}\eeq}
\font\tenbi=cmmib10   at 11 pt
\font\sevenbi=cmmib10 at 9pt
\font\fivebi=cmmib7 at 6pt
\def\bi{\fam\bifam\tenbi}
\font\sixtdb=msbm10 at 16 pt \font\tendb=msbm10 at 12 pt  \font\sevendb=msbm7
\def\Dt {\triangle t}
\def\n{{\bi n}}
\def\x{{\bi x}}
\def\vPhi{{\vec{\Phi}}}
\def\ve{{\vec{e}\ \!}}
\def\R{\mathbb{R}}
\def\Dt {\tau}
\title[A  linear doubly stabilized Crank-Nicolson scheme for the Allen-Cahn equation]
{A  linear doubly stabilized  Crank-Nicolson scheme for the Allen-Cahn equation with a general mobility$^*$}
\author[Dianming Hou, Lili Ju and Zhonghua Qiao]
{Dianming Hou$^{1}$
\quad
Lili Ju$^{2}$
\quad
Zhonghua Qiao$^{3}$
}
\thanks{
$^{1}$School of Mathematics and Statistics, Jiangsu Normal University, Xuzhou, Jiangsu 221116, China. Email: {\tt dmhou@stu.xmu.edu.cn}. Current address: Department of Applied Mathematics, The Hong Kong Polytechnic University, Hung Hom, Kowloon, Hong Kong. D. Hou's work is partially supported by Natural Science Foundation of China grant 12001248,  Jiangsu Province Higher Education Institutions grant
BK20201020,  Jiangsu Province Universities  Science Foundation grant 20KJB110013 and  Hong Kong Polytechnic University grant 1-W00D. \\
$^{2}$Department of Mathematics, University of South Carolina, Columbia, SC 29208, USA. Email:  {\tt ju@math.sc.edu}. L. Ju's work is partially supported by US National Science Foundation grant DMS-2109633.\\
$^{3}$Department of Applied Mathematics, The Hong Kong Polytechnic University, Hung Hom, Kowloon, Hong Kong. Email:  {\tt zqiao@polyu.edu.hk}. Z. Qiao‘s work   is partially supported by Hong Kong Research Council RFS grant RFS2021-5S03 and GRF grant 15302919, Hong Kong Polytechnic University grant 4-ZZLS, and CAS AMSS-PolyU Joint Laboratory of Applied Mathematics.
}
\keywords {Allen-Cahn equation, general mobility, linear scheme, Crank-Nicolson}
\subjclass[2010]{65M06, 65M15, 41A05, 41A25}
\begin{document}
\graphicspath{{figures/},}
\maketitle

\begin{abstract}
In this paper, a linear second order numerical scheme is developed and investigated for the Allen-Cahn equation with a general positive mobility.
In particular, our fully discrete scheme is mainly constructed based on the Crank-Nicolson formula for temporal discretization
and the central finite difference method for spatial approximation, and two extra stabilizing terms are also  introduced for the purpose of improving numerical stability.  The proposed scheme is shown to unconditionally preserve the maximum bound principle (MBP) under  mild restrictions on the stabilization parameters, which is of practical importance for achieving good accuracy and stability simultaneously. With the help of uniform boundedness of the numerical solutions due to MBP, we then successfully derive $H^{1}$-norm and $L^{\infty}$-norm error estimates  for the Allen-Cahn equation with a constant and a variable mobility, respectively.
Moreover, the energy stability of the proposed scheme is also obtained in the sense that the discrete free energy is uniformly bounded by the one at the initial time plus a {\color{black}constant}. Finally, some numerical experiments are carried out to verify the theoretical results and illustrate the
performance of the proposed scheme with a time adaptive strategy.
\end{abstract}

\section{Introduction}
\setcounter{equation}{0}

In this paper, we study numerical solution of the following Allen-Cahn equation with a general  mobility $M(\phi)\geq M_{0}>0$:
\brr\label{prob}
\begin{cases}
\dps\frac{\partial \phi}{\partial t}=\dps-M(\phi)\big(-\varepsilon^{2}\Delta\phi+F'(\phi)\big),&\quad (\x,t)\in\Omega\times (0,T],\\
\phi(\x,0)=\phi_{0}(\x),&\quad \x\in\Omega,
\end{cases}
\err
which often arises from modeling of phase transitions and interfacial dynamics in materials science.
 Here, $\Omega$ is a  bounded Lipschitz domain in $\R^{d}$ $(d=1,2,3)$,
 $T>0$ is the terminal time,
 $\phi(\x,t)$ is the unknown phase function, the positive parameter $\varepsilon$ is called the diffuse interface width parameter,
 and $F(\phi)=\frac14(1-\phi^{2})^{2}$ is  the double-well potential function. We also assume that the problem is subject to suitable boundary conditions such as  the homogeneous Neumann, the periodic, or the homogeneous Dirichlet boundary condition.
The Allen–Cahn equation \eqref{prob} can be viewed as the $L^{2}$ gradient flow of the energy
\bq\label{energy}
E(\phi)=\int_{\Omega}\Big(\frac{\varepsilon^{2}}{2}|\nabla\phi|^{2}+F(\phi)\Big)d\x,
\eq
which leads to the dissipation of the free energy $E(\phi)$ over time, that is
 \bq\label{EDlaw}
 \frac{d}{dt}E(\phi)
 =-\int_{\Omega}M(\phi)\mu^{2}d\x\leq0.
 \eq
Another intrinsic property of the Allen–Cahn equation \eqref{prob} is the maximum bound principle (MBP), i.e.,  if $|\phi(\x,0)|\leq1$  for all $\x\in\Omega$ then $|\phi(\x,t)|\leq1$ for all $\x\in\Omega$ and $t\geq 0$, and one can refer to \cite{STY16} for more discussions.
To numerically investigate the Allen-Cahn equation \eqref{prob}, it is essentially important for the numerical schemes to preserve these physical properties in the discrete level, particularly  the preservation of MBP, otherwise it could encounter the negativity of the mobility $M(\phi)$ which may lead to failing of the numerical schemes.

Over the past few decades, a great deal of works \cite{FSTY13,TY16,STY16,YYZ22,XHF20} has been devoted to developing  structure-preserving time-stepping schemes for the Allen-Cahn equation, particularly for the models with constant mobility.
Among the existing works, first order (in time) liner stabilized semi-implicit schemes combined with the central finite difference method for spatial discretization were  proposed for the Allen-Cahn equation \eqref{prob} in \cite{TY16} and the generalized case with a advection term in \cite{STY16}. These proposed schemes unconditionally preserve the discrete MBP in both cases and the energy stability in the constant mobility case.
 A nonlinear second-order Crank-Nicolson scheme for the space-fractional Allen-Cahn equation was developed  in \cite{HTY17}, in which the convex splitting approach was taken to deal with the nonlinear term. This scheme was proved to conditionally preserve the discrete MBP and the discrete energy dissipation law, and  some corresponding error estimates were also obtained.
A nonlinear two-step second-order backward differentiation formula (BDF2) scheme with nonuniform grids for the Allen-Cahn equation was studied in \cite{LTZ20}, in which the nonlinear term was treated fully implicitly.
The MBP preservation and energy stability of the developed scheme were obtained under some constraints on the time step size and the ratio of two successive time steps.
Recently, Hou et al. \cite{HL20} proposed a linear stabilized second-order Crank-Nicolson/Adams-Bashforth scheme for the Allen-Cahn equation.
It was shown that the numerical scheme preserved the discrete MBP and a modified energy stability conditionally.
Very recently, a linear stabilized BDF2 scheme with variable time steps was numerically studied for the Allen-Cahn equation \eqref{prob} in \cite{HJQ22}, and the discrete MBP of the developed variable-step scheme has been rigorously obtained with certain constrains on the time-step sizes and the adjacent time step ratios.
We would like to remark  that there is no linear second-order unconditional MBP preservation scheme among the above existing works.

A series of structure-preserving exponential time differencing (ETD) and integrating factor Runge-Kutta (IFRK) methods were also investigated for a class of semilinear parabolic equations  in \cite{JLQY21,LLJF21,JJLL21,LJCF21,HJL22,CJLL23,DJLQ19,Liu2022B177,NS22}, all of them unconditionally and conditionally preserve the discrete MBP.
In recent work \cite{DJLQ21}, Du et al. established an abstract framework of MBP investigation for problem \eqref{prob}, where sufficient conditions on linear and nonlinear operators are given such that the equation satisfies MBP and the corresponding MBP preserving first-order ETD and second-order ETD Runge-Kutta (ETDRK) schemes were developed and analyzed.
It was proved in \cite{FY22} that the stabilized first and second order ETDRK schemes unconditionally preserve the discrete energy dissipation law for the Allen-Cahn equation.
 By combining the scalar auxiliary variable (SAV) approach with linear stabilized ETD methods,  some novel SAV-EI schemes for the Allen-Cahn equation  were proposed \cite{JLQ22_1,JLQ22_2} which satisfy both the energy dissipation law and MBP in the discrete level.
 Several third- and fourth-order MBP-preserving schemes \cite{LLJF21,ZYQGS21,ZYQS21,ZYQCS22,CQS23} were developed and analyzed for the Allen–Cahn
equation using the integrating factor Runge–Kutta approach.
 An arbitrarily high-order multistep exponential integrator method
was given in \cite{LYZ20} by enforcing the maximum bound via a cut-off operation.
Due to that fact that these high-order MBP-preserving methods are derived from either the variation-of-constant formula or an exponential transformation of the solution, it seems not easy to extend these approaches to the Allen-Cahn equation \eqref{prob} with a general variable mobility.

The goal of this paper is to propose and analyze a linear second-order, unconditionally MBP preserving scheme for the Allen-Cahn equation  \eqref{prob} with a general mobility, based on  the Crank-Nicolson time-stepping formula and the linear stabilizing approach.
The novelties and significance of this paper include:
first,  a  linear doubly stabilized Crank-Nicolson scheme for the model is constructed for the first time, which is of second order accuracy and possesses the property of unconditional MBP preservation; second,  energy stability of the proposed scheme is established in the sense  that the discrete energy at all time steps  is uniformly bounded by the initial one; third,  error estimates for the proposed scheme with nonuniform temporal mesh are successfully established in the $H^{1}$-norm for the case of constant mobility and in the $L^{\infty}$-norm for the case with variable mobility, respectively;
fourth, the proposed scheme is very efficient (there are only two Poisson-type equations to be solved at each time step) and  can be easily adopted with existing time adaptive strategies.

The rest of the paper is organized as follows. In Section 2 , we present the fully-discrete linear doubly stabilized Crank-Nicolson scheme for the Allen-Cahn equation with a general mobility \eqref{prob} and prove its unconditional preservation of discrete MBP. Some fully-discrete error estimates in the $L^{\infty}$ and $H^{1}$ norms and energy stability are then derived for the propose scheme in Section 3. In Section 4,  various numerical experiments are presented to verify the theoretical results and demonstrate the performance of the proposed scheme. Finally,  concluding remarks are drawn in Section 5.

\section{The fully-discrete  linear doubly stabilized Crank-Nicolson scheme}
\setcounter{equation}{0}
Without loss of generality, the two-dimensional problem ($d=2$) with the homogenous Neumann boundary condition, i.e., $\frac{\partial\phi}{\partial \n}\big|_{\partial\Omega}=0 $ is considered in what follows. We also note that it is straightforward to extend the proposed scheme and  corresponding analysis results to the cases of higher dimensional spaces and/or  other boundary conditions.

\subsection{Spatial discretization by central difference}\label{sub1}

We use the notations and preliminary results of the central difference function spaces and operators reported in \cite{WISE10,BLWW13,BHLWWZ13,SWWW12,HWWL09,WWL09,LSR19,WW88}. For more complete details, one can refer to these works.
For simplicity, we consider a square computational domain $\Omega=(0,L)\times(0,L),$ and the uniform spatial grid spacing $h=L/M$.
Define the following discrete function spaces:
\bry
\mathcal{C}_{h}=\;&\dps\{U: \mathbf{C}_{M}\times\mathbf{C}_{M}\rightarrow\R\;\big|\;U_{i,j}, \;1\leq i,j\leq M\},\\[4pt]
 e^{x}_{h}=\;&\dps\{U: \mathbf{E}_{M}\times\mathbf{C}_{M}\rightarrow\R\;\big|\;U_{i+\frac{1}{2},j}, \;0\leq i\leq M,\;1\leq j\leq M\},\\[4pt]
 e^{y}_{h}=\;&\dps\{U: \mathbf{C}_{M}\times\mathbf{E}_{M}\rightarrow\R\;\big|\;U_{i,j+\frac{1}{2}}, \;1\leq i\leq M,\;0\leq j\leq M\},\\[4pt]
 e^{x}_{0,h}=\;&\dps\{U\in e^{x}_{h}\;\big|\;U_{\frac{1}{2},j}=U_{M+\frac{1}{2},j}=0,\; 1\leq j\leq M\},\\[4pt]
 e^{y}_{0,h}=\;&\dps\{U\in  e^{y}_{h}\;\big|\;U_{i,\frac{1}{2}}=U_{i,M+\frac{1}{2}}=0,\; 1\leq i\leq M\},
\ery
where the two types of point sets $E_{M}$ and $C_{M}$ are given by
$$\mathbf{E}_{M}=\{x_{i+\frac{1}{2}}=ih\;\big|\; i=0,1, \cdots,M\},\qquad \mathbf{C}_{M}=\{x_{i}=\big(i-\textstyle\frac{1}{2}\big)h\;\big|\; i=1, \cdots,M\}.$$
Then, we define the discrete gradient operator $\nabla_{h} =(\nabla^x_{h}, \nabla^y_{h}): \mathcal{C}_h\rightarrow( e_{0,h}^{x},  e^{y}_{0,h})$ by
\bq
(\nabla^x_{h}U)_{i+\frac{1}{2},j}=\dps\frac{U_{i+1,j}-U_{i, j}}{h}, \quad 1\leq i\leq M-1,\;1\leq j\leq M,
\eq
\bq
(\nabla^y_{h}U)_{i,j+\frac{1}{2}}=\dps\frac{U_{i,j+1}-U_{i, j}}{h}, \quad 1\leq i\leq M,\;1\leq j\leq M-1,
\eq
for any $U\in \mathcal{C}_h$,
and the discrete divergence operator $\nabla_{h}\cdot:( e_{h}^{x},  e^{y}_{h})\rightarrow\mathcal{C}_h$ by
\bq
(\nabla_{h}\cdot (U^{x},U^{y})^T)_{i,j}=\tps\frac{U^x_{i+\frac{1}{2},j}-U^x_{i-\frac{1}{2},j}}{h}+\tps\frac{U^y_{i,j+\frac{1}{2}}-U^y_{i,j-\frac{1}{2}}}{h},\quad 1\leq i,j\leq M
\eq
for any $(U^{x},U^{y})^{T}\in ( e_{h}^{x},  e^{y}_{h}).$ Note that the above discrete gradient and divergence operators  are compatible with the homogeneous Neumann boundary condition.
Then, we use the discrete gradient and divergence operators to obtain the  discrete Laplacian $\Delta_{h}:\mathcal{C}_{h}\rightarrow\mathcal{C}_{h}$, given by
\bq
(\Delta_{h}U)_{i,j}=(\nabla_{h}\cdot(\nabla_{h} U))_{i,j},\quad 1\leq i,j\leq M.
\eq
Next we are ready to define the following discrete inner-products:
\bry
&\big<U,V\big>_{\Omega}=\dps h^{2}\sum_{i,j=1}^{M}U_{i,j}V_{i,j},\quad \forall\,U,V\in\mathcal{C}_{h},\\[7pt]
&[U^x,V^x]_{x}=\big<a_{x}(U^xV^x),1\big>_{\Omega},\quad \forall\,U^x,V^x\in e^{x}_{h},\\[7pt]
&[U^y,V^y]_{y}=\dps\big<a_{y}(U^yV^y),1\big>_{\Omega},\quad \forall\,U^y,V^y\in e^{y}_{h},\\[7pt]
& [(U^{x},U^{y})^{T},(V^{x},V^{y})^{T}]_{\Omega}=[U^{x},V^{x}]_{x}+[U^{y},V^{y}]_{y},\\
\ery
where $a_{x}: e^{x}_{h}\rightarrow\mathcal{C}_h$ and $a_{y}: e^{y}_{h}\rightarrow\mathcal{C}_h$ are the two average operators defined by $(a_{x}U)_{i,j}=({U_{i+1/2,j}+U_{i-1/2,j}})/2$
 and $(a_{y}U)_{i,j}=({U_{i,j+1/2}+U_{i,j-1/2}})/2$ for $1\leq i,j\leq M$, respectively.
Then,  for any $U\in\mathcal{C}_{h}$, its corresponding discrete  $L^2, H^1$ semi-norms and norms, and the $L^{\infty}$-norm are respectively given by:
\bry
&\dps\|U\|^{2}_{h}=\big<U,U\big>_{\Omega},\quad \|\nabla_{h}U\|^{2}_{h}=[\nabla_{h}U,\nabla_{h}U]_{\Omega}=[d_{x}U,d_{x}U]_{x}+[d_{y}U,d_{y}U]_{y},\\[7pt]
&\dps\|U\|^{2}_{H^{1}_{h}}=\|U\|^{2}_{h}+\|\nabla_{h}U\|^{2}_{h},\quad \|U\|_{\infty}=\max_{0\leq i\leq N}\sum_{j=0}^{N}|U_{i,j}|.
\ery
From these above definitions, we obtain the following results.
\begin{lemma}[\cite{LSR19,WW88}]\label{intpat}
For any $U,V\in\mathcal{C}_{h}$, it holds
\bq
-\big<\Delta_{h}U,V\big>_{\Omega}=[\nabla_{h}U,\nabla_{h}V]_{\Omega}.
\eq
\end{lemma}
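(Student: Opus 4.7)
The statement is a discrete integration-by-parts identity, so the natural approach is a straightforward summation-by-parts computation carried out separately in the $x$- and $y$-directions. Since $\Delta_h = \nabla_h\!\cdot\!\nabla_h$ and the discrete inner product $[\cdot,\cdot]_\Omega$ splits as $[\cdot,\cdot]_x + [\cdot,\cdot]_y$, the problem reduces to proving a one-dimensional discrete summation-by-parts identity in each coordinate. Everything hinges on the fact that $\nabla_h U$ automatically lies in $(e^x_{0,h}, e^y_{0,h})$, i.e.\ its edge components vanish at the boundary of $\Omega$; this is what encodes the homogeneous Neumann condition and eliminates the boundary terms that would otherwise appear.

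Concretely, I would first expand the left-hand side as
\[
-\langle\Delta_h U,V\rangle_\Omega = -h^2\sum_{i,j=1}^{M}V_{i,j}\Bigl(\tfrac{(\nabla_h^x U)_{i+1/2,j}-(\nabla_h^x U)_{i-1/2,j}}{h}+\tfrac{(\nabla_h^y U)_{i,j+1/2}-(\nabla_h^y U)_{i,j-1/2}}{h}\Bigr),
\]
and then handle the two pieces symmetrically. For the $x$-piece, I would fix $j$ and apply a standard index-shift (Abel summation) on $\sum_i V_{i,j}[(\nabla_h^x U)_{i+1/2,j}-(\nabla_h^x U)_{i-1/2,j}]$. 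This produces a bulk sum $h\sum_{i=1}^{M-1}(V_{i+1,j}-V_{i,j})(\nabla_h^x U)_{i+1/2,j}$ together with two boundary terms at $i=\tfrac12$ and $i=M+\tfrac12$. Since $\nabla_h^x U\in e^x_{0,h}$, both boundary terms drop out, and the bulk sum is exactly $h^2\sum_{i=1}^{M-1}(\nabla_h^x V)_{i+1/2,j}(\nabla_h^x U)_{i+1/2,j}$ after recognizing the forward difference of $V$.

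Next I would identify the resulting expression with $[\nabla_h^x U,\nabla_h^x V]_x$. By the definition of the averaging operator $a_x$, one has
\[
[\nabla_h^x U,\nabla_h^x V]_x = \tfrac{h^2}{2}\sum_{i,j=1}^{M}\bigl((\nabla_h^x U)_{i+1/2,j}(\nabla_h^x V)_{i+1/2,j} + (\nabla_h^x U)_{i-1/2,j}(\nabla_h^x V)_{i-1/2,j}\bigr),
\]
and a reindexing shows this collapses to $h^2\sum_{j=1}^{M}\sum_{i=1}^{M-1}(\nabla_h^x U)_{i+1/2,j}(\nabla_h^x V)_{i+1/2,j}$, again using the boundary-vanishing of $\nabla_h^x U$ in $e^x_{0,h}$ to discard the terms at $i=\tfrac12$ and $i=M+\tfrac12$. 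The $y$-direction is treated identically. Summing the two contributions yields $[\nabla_h U,\nabla_h V]_\Omega$, completing the proof.

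The calculation is essentially bookkeeping, so there is no real obstacle, but the one point that requires care is the double-counting introduced by the averaging operators $a_x,a_y$ in the definition of $[\cdot,\cdot]_x$ and $[\cdot,\cdot]_y$: one must correctly track that the factor $\tfrac12$ in $a_x$ cancels the doubling that arises when each interior edge value $U^x_{i+1/2,j}$ is picked up once by the cell to its left and once by the cell to its right, and simultaneously that only the interior edges $i=1,\ldots,M-1$ survive because of the homogeneous boundary conditions on $\nabla_h U$.
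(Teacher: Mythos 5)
Your summation-by-parts argument is correct: the boundary edge values of $\nabla_h U$ vanish because $\nabla_h$ maps into $(e^x_{0,h},e^y_{0,h})$, the index shift then leaves only the interior-edge sum, and the factor $\tfrac12$ in $a_x,a_y$ exactly compensates the double counting of interior edges, so both sides reduce to the same sum. The paper itself does not prove this lemma but cites it from \cite{LSR19,WW88}, and your computation is precisely the standard proof given there, so there is nothing further to reconcile.
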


\subsection{Time integration by linear Crank-Nicolson scheme}
Let $0=t_{0}<t_1<t_2<\cdots<t_N=T$ be a general partition of the time interval $[0,T]$ with time step size $\Dt_{n}=t_{n}-t_{n-1}$ for $n=1,2,\cdots,N$.
We denote the maximum time step size of such time partition by $\tau= \max_{1\leq n\leq N}\Dt_{n}$, and the operator pointwisely  limiting a function onto $\mathcal{C}_h$ by $\Pi_{\mathcal{C}_h}$.
Let $\vec{U}=[U_{1,1},\cdots,U_{1,M};\cdots;U_{M,1},\cdots,U_{M,M}]^{T}\in{\mathbb R}^{M^2}$ be vector form of $U\in\mathcal{C}_{h}$.

The fully-discrete linear stabilized first-order BDF scheme for solving the Allen-Cahn equation with general mobility \eqref{prob} reads as follows, seeing also \cite{STY16,TY16}: given $\Phi^0=\Pi_{\mathcal{C}_h}\phi_0$,  for $n=0,1,\cdots,N-1$, find $\Phi^{n+1}\in\mathcal{C}_{h}$ such that
 \bq\label{BDF_1}
\frac{\Phi^{n+1}-\Phi^{n}}{\Dt_{n+1}}-\varepsilon^{2}M(\Phi^{n})\Delta_{h}\Phi^{n+1}+f(\Phi^{n})+S_{1}(\Phi^{n+1}-\Phi^{n})=0,
  \eq
  where $f(\phi) =M(\phi)F'(\phi)$ and  $S_{1}$ is  a nonnegative stabilizing parameter. Hereafter, we call the above scheme BDF1 and  denote it as $\Phi^{n+1} = {\rm BDF1}(\Phi^n,\tau_{n+1})$.
  Moreover, it also can be rewritten in vector form as follows:
  \bq\label{BDF1_tsor}
\frac{\vPhi^{n+1}-\vPhi^{n}}{\Dt_{n+1}}-\varepsilon^{2}\Lambda^{n}D_{h}\vPhi^{n+1}+f(\vPhi^{n})+S_{1}(\vPhi^{n+1}-\vPhi^{n})=0,
  \eq
   where
$D_{h}=I\otimes G_{h}+G_{h}\otimes I\in {\mathbb R}^{M^2\times M^2}$. Here, $I$ denotes the identity matrix (with the matched dimensions) and $G_{h}$ is a diagonally dominant tridiagonal Matrix, given by
\[ G_{h}=\frac{1}{h^{2}}
\begin{pmatrix}
-1&1&&&&\\
1&-2&1&&&\\
&\ddots &\ddots &\ddots&\\
&&1&-2&1&\\
&&&1&-1&\\
\end{pmatrix}_{M\times M}.\]
The matrix $f(\vPhi^{n})$ is defined elementwise, that is  $f(\vPhi^{n})=\Lambda^{n}\big(\big(\vPhi^{n}\big)^{.3}+\vPhi^{n}\big)$ with a diagonal matrix $\Lambda^{n}=\mbox{diag}(M(\vPhi^{n}))$.

From the definition of the free energy $E(\phi)$ in \eqref{energy}, we define an analogous discrete energy $E_{h}(\Phi^{n})$ in the form of
\brr\label{dis_eg}
E_{h}(\Phi^{n})&\dps=\frac{\varepsilon^{2}}{2}[\nabla_{h}\Phi^n,\nabla_{h}\Phi^n]_{\Omega}+\big<F(\Phi^{n}),1\big>_{\Omega}\\
&\dps=-\frac{h^{2}\varepsilon^{2}}{2}(\vPhi^{n})^{T}D_{h}\vPhi^{n}+h^{2}\sum_{i=1}^{M^{2}}F(\vPhi^{n}_{i}).
\err

As reported in Theorem 3.2 in \cite{STY16} and Theorem 3 in \cite{TY16}, the fully-discrete BDF1 scheme \eqref{BDF_1} is unconditionally energy stable and MBP preserving in the discrete sense with a mild restriction on the stabilizing parameter $S_{1}$, stated in the following lemma.

\begin{lemma}[\cite{STY16,TY16}]\label{lem1}
Assume that $\|\vPhi^{0}\|_{\infty}\leq 1$ and the stabilizing parameter $S_{1}$ satisfies
\bq\label{eqn1_3}
S_{1}\geq \max_{\rho\in[-1,1]}\big( M'(\rho)F'(\rho)+M(\rho)F''(\rho)\big).
\eq
For the BDF1 scheme \eqref{BDF_1}, it holds that $\|\vPhi^{n+1}\|_{\infty}\leq1$ for $n=0,1,\cdots,N-1$.
Furthermore, in the case of the mobility function $M(\phi)\equiv1$, we have
\bq
E_{h}(\Phi^{n+1})\leq E_{h}(\Phi^{n}),\quad  \forall\,n=0,1,\cdots,N-1,
\eq
 provided that $S_{1}\geq2$.
\end{lemma}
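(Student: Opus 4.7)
The plan is to treat the two claims separately. For the MBP I would use a discrete maximum principle / $M$-matrix argument combined with a one-variable monotonicity estimate that packages the effect of the stabilization $S_{1}$. For the energy decay in the constant-mobility case I would test the scheme against $\Phi^{n+1}-\Phi^{n}$ in the discrete inner product and use the MBP to tame the nonlinear term by a pointwise Taylor expansion.

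For the MBP, the first step is to rewrite \eqref{BDF1_tsor} as
\beq
A\,\vPhi^{n+1}\;=\;(1+S_{1}\Dt_{n+1})\,\vPhi^{n}-\Dt_{n+1}\,f(\vPhi^{n})\;=:\;\vec{R}^{\,n},
\eeq
where $A=(1+S_{1}\Dt_{n+1})I-\Dt_{n+1}\varepsilon^{2}\Lambda^{n}D_{h}$. The matrix $-D_{h}$ has positive diagonal, nonpositive off-diagonals and zero row sums, and $\Lambda^{n}$ is a positive diagonal matrix with entries bounded below by $M_{0}>0$. Hence $A$ is a strictly diagonally dominant $M$-matrix with $A\mathbf{1}=(1+S_{1}\Dt_{n+1})\mathbf{1}$, so $A^{-1}\ge 0$ componentwise and $\|A^{-1}\|_{\infty}=(1+S_{1}\Dt_{n+1})^{-1}$. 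To bound $\vec{R}^{\,n}$ componentwise, introduce the scalar function $g(\rho)=(1+S_{1}\Dt_{n+1})\rho-\Dt_{n+1}M(\rho)F'(\rho)$. Since $F'(\pm 1)=0$, one has $g(\pm 1)=\pm(1+S_{1}\Dt_{n+1})$, and \eqref{eqn1_3} gives $g'(\rho)=1+S_{1}\Dt_{n+1}-\Dt_{n+1}\bigl[M'(\rho)F'(\rho)+M(\rho)F''(\rho)\bigr]\ge 1$ on $[-1,1]$, so $g$ is monotone on $[-1,1]$ with $g([-1,1])=[-(1+S_{1}\Dt_{n+1}),\,1+S_{1}\Dt_{n+1}]$. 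Induction on $n$ (with base $\|\vPhi^{0}\|_{\infty}\le 1$) then gives $|\vec{R}^{\,n}_{i}|=|g(\vPhi^{n}_{i})|\le 1+S_{1}\Dt_{n+1}$, whence $\|\vPhi^{n+1}\|_{\infty}\le\|A^{-1}\|_{\infty}\|\vec{R}^{\,n}\|_{\infty}\le 1$, closing the induction.

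For the energy decay with $M\equiv 1$, I would take the discrete inner product of \eqref{BDF_1} against $\Phi^{n+1}-\Phi^{n}$, apply Lemma \ref{intpat} together with the identity $2\langle a,a-b\rangle_{\Omega}=\|a\|_{h}^{2}-\|b\|_{h}^{2}+\|a-b\|_{h}^{2}$, and handle the nonlinear term via the pointwise Taylor expansion $F(\Phi^{n+1})-F(\Phi^{n})=F'(\Phi^{n})(\Phi^{n+1}-\Phi^{n})+\tfrac{1}{2}F''(\xi)(\Phi^{n+1}-\Phi^{n})^{2}$ with $\xi$ between $\Phi^{n}$ and $\Phi^{n+1}$. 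Because the MBP just proved puts $\xi\in[-1,1]$, we have $F''(\xi)=3\xi^{2}-1\le 2$, and with $S_{1}\ge 2$ the stabilization contribution $S_{1}\|\Phi^{n+1}-\Phi^{n}\|_{h}^{2}$ absorbs the leftover $-\tfrac{1}{2}\langle F''(\xi)(\Phi^{n+1}-\Phi^{n})^{2},1\rangle_{\Omega}$. Assembling the pieces yields $E_{h}(\Phi^{n+1})-E_{h}(\Phi^{n})\le -\Dt_{n+1}^{-1}\|\Phi^{n+1}-\Phi^{n}\|_{h}^{2}-\tfrac{\varepsilon^{2}}{2}\|\nabla_{h}(\Phi^{n+1}-\Phi^{n})\|_{h}^{2}\le 0$, as desired. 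The main obstacle is the scalar lemma about $g$: identifying the threshold \eqref{eqn1_3} that simultaneously pins the endpoints $g(\pm 1)=\pm(1+S_{1}\Dt_{n+1})$ and enforces monotonicity is exactly what makes the explicit nonlinear treatment compatible with the maximum-principle envelope; once that ingredient is in hand, the remaining steps reduce to standard $M$-matrix estimates and Taylor-remainder manipulations.
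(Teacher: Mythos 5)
Your proof is correct; note that the paper itself does not prove Lemma \ref{lem1} but imports it from \cite{STY16,TY16}, and your argument is essentially the standard one from those references and is fully consistent with the machinery the paper uses elsewhere (the scalar bound of Lemma \ref{para} is exactly your monotonicity estimate for $g$ in disguise, and your $M$-matrix step could equivalently be replaced by Lemma \ref{lemm2} with $a=1+S_{1}\Dt_{n+1}$). The only cosmetic remark is that your Taylor-remainder bound $F''(\xi)\le 2$ actually shows $S_{1}\ge 1$ already suffices for energy decay, so the stated threshold $S_{1}\ge 2$ is comfortably covered.
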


{\color{black}We} are now ready to present a fully-discrete linear second-order Crank-Nicolson (CN) scheme with two stabilizing terms
for the Allen-Cahn equation with general mobility \eqref{prob}, which reads: given $\Phi^0=\Pi_{\mathcal{C}_h}\phi_0$, and for $n=1,2\cdots,N-1$, find $\Phi^{n+1}\in\mathcal{C}_{h}$ such that
	\begin{subequations}\label{CN_2}
	      \begin{empheq}[left=\empheqlbrace]{align}
		&\Phi^{n+\frac{1}{2}} = {\rm BDF1}(\Phi^n,\tau_{n+1}/2),\label{eqn41}\\
		&\frac{\Phi^{n+1}-\Phi^{n}}{\Dt_{n+1}}-\varepsilon^{2}M(\Phi^{n+\frac{1}{2}})\Delta_{h}\frac{\Phi^{n+1}+\Phi^{n}}{2}+f(\Phi^{n+\frac{1}{2}}) \nonumber\\
  &\qquad\qquad\qquad\qquad+S_{1}\Big(\frac{\Phi^{n+1}+\Phi^{n}}{2}-\Phi^{n+\frac{1}{2}}\Big)+S_{2}\Dt_{n+1}(\Phi^{n+1}-\Phi^{n})=0,\label{eqn4}
		\end{empheq}
       \end{subequations}
       where  the constants $S_{1}$ and $S_2$ are two nonnegative constant  stabilizing parameters.
 The linear doubly stabilized CN scheme \eqref{CN_2} also can be rewritten in the following vector form, as follows:
 	\begin{subequations}\label{CN2_tsor}
	      \begin{empheq}[left=\empheqlbrace]{align}
		&\vPhi^{n+\frac{1}{2}} = {\rm BDF1}(\vPhi^n,\tau_{n+1}/2)\label{CN2_tsor1}\\
		&\frac{\vPhi^{n+1}-\vPhi^{n}}{\Dt_{n+1}}-\varepsilon^{2}\Lambda^{n+\frac{1}{2}}D_{h}\frac{\vPhi^{n+1}+\vPhi^{n}}{2}+f(\vPhi^{n+\frac{1}{2}}) \nonumber\\
  &\qquad\qquad\qquad\qquad+S_{1}\Big(\frac{\vPhi^{n+1}+\vPhi^{n}}{2}-\vPhi^{n+\frac{1}{2}}\Big)+S_{2}\Dt_{n+1}(\vPhi^{n+1}-\vPhi^{n})=0,\label{CN2_tsor2}
		\end{empheq}
       \end{subequations}
      where $\Lambda^{n+\frac{1}{2}}=\mbox{diag}(M(\vPhi^{n+\frac{1}{2}}))$.

  \subsection{Discrete maximum bound principle}
Let us first recall some useful lemmas needed for the analysis of the discrete MBP for the proposed scheme \eqref{CN_2}.
\begin{lemma}[\cite{TY16,LTZ20,HTY17}]\label{lemm2}
Suppose that $B= (b_{i,j})$ is a real $P\times P$ matrix satisfying
\beq
 b_{i,i}<0,\quad |b_{i,i}|\geq\tps\sum_{j\neq i}^{P}|b_{i,j}|,\ \ \ {\color{black}i=1,2,\cdots,P.}
\eeq
Let  $A=aI-B$ where $a>0$ is a constant, then
\beq
\|A\overrightarrow{U}\|_{\infty}\geq a \|\overrightarrow{U}\|_{\infty},\quad \forall\, \overrightarrow{U}\in\R ^{P}.
\eeq
\end{lemma}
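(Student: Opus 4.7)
The plan is to exploit the diagonal dominance of $B$ by looking at the component of $A\overrightarrow{U}$ at an index where $\overrightarrow{U}$ attains its maximum magnitude, i.e., a discrete maximum-principle argument. First I would pick an index $i^\ast\in\{1,2,\dots,P\}$ such that $|U_{i^\ast}|=\|\overrightarrow{U}\|_\infty$. Since $A$ is linear and both $\|\cdot\|_\infty$ norms in the inequality are invariant under $\overrightarrow{U}\mapsto -\overrightarrow{U}$, I may without loss of generality assume $U_{i^\ast}\geq 0$; if $U_{i^\ast}=0$ the inequality is trivial, so I further take $U_{i^\ast}>0$.

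Next, using $A=aI-B$, I would expand the $i^\ast$-th component:
\[
(A\overrightarrow{U})_{i^\ast} \;=\; aU_{i^\ast} - b_{i^\ast,i^\ast}U_{i^\ast} - \sum_{j\neq i^\ast} b_{i^\ast,j}U_j.
\]
Since $b_{i^\ast,i^\ast}<0$, the middle term equals $|b_{i^\ast,i^\ast}|\,U_{i^\ast}\geq 0$, while the off-diagonal sum is controlled by the dominance hypothesis:
\[
\Bigl|\sum_{j\neq i^\ast} b_{i^\ast,j}U_j\Bigr| \;\leq\; \sum_{j\neq i^\ast}|b_{i^\ast,j}|\,|U_j| \;\leq\; U_{i^\ast}\!\sum_{j\neq i^\ast}|b_{i^\ast,j}| \;\leq\; |b_{i^\ast,i^\ast}|\,U_{i^\ast}.
\]
Combining these two observations gives $(A\overrightarrow{U})_{i^\ast}\geq aU_{i^\ast}>0$, and therefore
\[
\|A\overrightarrow{U}\|_\infty \;\geq\; \bigl|(A\overrightarrow{U})_{i^\ast}\bigr| \;\geq\; aU_{i^\ast} \;=\; a\|\overrightarrow{U}\|_\infty,
\]
which is the desired bound.

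There is no real obstacle in this argument — it is the textbook maximum-principle style inequality for operators of $M$-matrix type. The only points worth care are the sign reduction at the start (which is legitimate precisely because $A$ is linear and $\|\cdot\|_\infty$ is absolutely homogeneous) and the fact that the dominance inequality $|b_{i^\ast,i^\ast}|\geq\sum_{j\neq i^\ast}|b_{i^\ast,j}|$ need only hold weakly, since we do not require strict inequality in the conclusion.
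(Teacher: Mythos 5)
Your argument is correct and is essentially the standard maximum-index (discrete maximum principle) proof that the cited references \cite{TY16,LTZ20,HTY17} use for this lemma, which the paper itself only quotes without reproving; the sign-normalization at the start and the weak diagonal dominance are handled properly, so nothing further is needed.
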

\begin{lemma}[\cite{TY16,HJQ22}]\
\label{para}
If the  stabilizing parameter $S_{1}$ satisfies \eqref{eqn1_3}, then it holds
\bq\label{eqn1_4}
\big|S_{1}\rho-f(\rho)\big|\leq S_{1},\quad\forall\,\rho\in[-1,1].
\eq
\end{lemma}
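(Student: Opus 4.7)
The idea is to treat the function $g(\rho) := S_{1}\rho - f(\rho) = S_{1}\rho - M(\rho)F'(\rho)$ on the compact interval $[-1,1]$ and show that its extrema occur at the endpoints, where they equal $\pm S_{1}$. First I would differentiate to get
\begin{equation*}
g'(\rho) = S_{1} - f'(\rho) = S_{1} - \bigl(M'(\rho)F'(\rho) + M(\rho)F''(\rho)\bigr).
\end{equation*}
The hypothesis \eqref{eqn1_3} on $S_{1}$ says exactly that $S_{1}$ dominates $M'(\rho)F'(\rho) + M(\rho)F''(\rho)$ over $\rho\in[-1,1]$, so $g'(\rho)\geq 0$ throughout $[-1,1]$, i.e., $g$ is monotonically nondecreasing on that interval.

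Next I would evaluate $g$ at the two endpoints, exploiting the specific double-well structure $F(\phi)=\frac14(1-\phi^{2})^{2}$, which gives $F'(\phi)=\phi^{3}-\phi$ and hence $F'(\pm 1)=0$. Therefore
\begin{equation*}
g(1) = S_{1} - M(1)F'(1) = S_{1}, \qquad g(-1) = -S_{1} - M(-1)F'(-1) = -S_{1}.
\end{equation*}
Combined with the monotonicity of $g$, this yields $-S_{1}\leq g(\rho)\leq S_{1}$ for all $\rho\in[-1,1]$, which is precisely \eqref{eqn1_4}.

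There is no real obstacle here; the argument is essentially a one-variable monotonicity check. The only subtlety worth flagging is that the endpoint evaluation relies specifically on $F'(\pm 1)=0$, which is a property of the double-well potential fixed earlier in the paper. If the potential were replaced by a more general $F$, one would need an additional hypothesis such as $M(\pm 1)F'(\pm 1)=0$ (or a direct a priori bound on $|M(\pm 1)F'(\pm 1)|$) to close the argument, but under the standing assumption on $F$ no such modification is needed.
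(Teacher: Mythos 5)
Your proof is correct: monotonicity of $g(\rho)=S_{1}\rho-f(\rho)$ from $g'=S_{1}-\bigl(M'F'+MF''\bigr)\geq 0$ under \eqref{eqn1_3}, together with $F'(\pm1)=0$ giving the endpoint values $\pm S_{1}$, is exactly the standard argument behind this lemma, which the paper does not reprove but imports from \cite{TY16,HJQ22}. Your remark about needing $M(\pm1)F'(\pm1)=0$ for a more general potential is a fair observation but not required here.
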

Next, we study the MBP preservation of the proposed CN scheme \eqref{CN_2} in the following theorem.
\begin{theorem}\label{thmMBP}
Assume that  the stabilizing parameter $S_{1}$ satisfies \eqref{eqn1_3} and  $\|\vPhi^{0}\|_{\infty}\leq 1$.
When $S_{2}=0$, the CN scheme \eqref{CN_2} is  conditionally MBP-preserving in the sense that if
\bq\label{res_tau}
 \dps \Dt_{n+1}\leq \frac{2}{S_{1}+4L\varepsilon^{2}/h^{2}}
 \eq
with $L:=\max_{\rho\in[-1,1]}M(\rho)$, then
   $\|\vPhi^{n+1}\|_{\infty}\leq1$ for all $n=0,1,\cdots,N-1$.
   When
\bq\label{res_s}
 \dps S_{2}\geq \Big(\frac{S_1}{4}+\frac{L\varepsilon^{2}}{h^{2}}\Big)^{2},
 \eq
 the CN scheme \eqref{CN_2} is unconditionally MBP-preserving.
\end{theorem}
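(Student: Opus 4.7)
The plan is to proceed by induction on $n$, with base case $\|\vPhi^{0}\|_{\infty}\leq 1$. Assuming $\|\vPhi^{n}\|_{\infty}\leq 1$, one first applies Lemma \ref{lem1} to \eqref{CN2_tsor1} (with the time step $\Dt_{n+1}/2$) to obtain $\|\vPhi^{n+\frac{1}{2}}\|_{\infty}\leq 1$, since \eqref{eqn1_3} is assumed throughout. I then multiply \eqref{CN2_tsor2} by $\Dt_{n+1}$ and collect the $\vPhi^{n+1}$ and $\vPhi^{n}$ contributions to rewrite the scheme as $A\,\vPhi^{n+1}=\alpha\,\vPhi^{n}+\frac{\varepsilon^{2}\Dt_{n+1}}{2}\Lambda^{n+\frac{1}{2}}D_{h}\vPhi^{n}+\Dt_{n+1}\bigl(S_{1}\vPhi^{n+\frac{1}{2}}-f(\vPhi^{n+\frac{1}{2}})\bigr)$, where $a:=1+\frac{S_{1}\Dt_{n+1}}{2}+S_{2}\Dt_{n+1}^{2}$, $\alpha:=1-\frac{S_{1}\Dt_{n+1}}{2}+S_{2}\Dt_{n+1}^{2}$, and $A:=aI-\frac{\varepsilon^{2}\Dt_{n+1}}{2}\Lambda^{n+\frac{1}{2}}D_{h}$. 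Because $D_{h}$ (the homogeneous-Neumann discrete Laplacian) has strictly negative diagonal, non-negative off-diagonals, and vanishing row sums, and $\Lambda^{n+\frac{1}{2}}$ is a positive diagonal matrix with entries in $(0,L]$, the matrix $\frac{\varepsilon^{2}\Dt_{n+1}}{2}\Lambda^{n+\frac{1}{2}}D_{h}$ satisfies the hypotheses of Lemma \ref{lemm2} with equality in the diagonal-dominance condition. Lemma \ref{lemm2} then yields $\|A\vPhi^{n+1}\|_{\infty}\geq a\,\|\vPhi^{n+1}\|_{\infty}$, so the task reduces to an $L^{\infty}$ bound for the right-hand side.

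The nonlinear stabilizer contribution $\Dt_{n+1}(S_{1}\vPhi^{n+\frac{1}{2}}-f(\vPhi^{n+\frac{1}{2}}))$ is controlled componentwise by Lemma \ref{para}, which yields at most $S_{1}\Dt_{n+1}$ in the $L^{\infty}$ norm. For the remainder $\alpha\,\vPhi^{n}+\frac{\varepsilon^{2}\Dt_{n+1}}{2}\Lambda^{n+\frac{1}{2}}D_{h}\vPhi^{n}$, my plan is a row-wise analysis: using the non-negativity of the off-diagonal entries of $D_{h}$ together with the zero row-sum property, the $i$-th component equals $(\alpha-\beta_{i})\vPhi^{n}_{i}+\sum_{j\neq i}w_{ij}\vPhi^{n}_{j}$, where $w_{ij}\geq 0$ with $\sum_{j\neq i}w_{ij}=\beta_{i}$, and $\beta_{i}:=-\frac{\varepsilon^{2}\Dt_{n+1}}{2}M(\vPhi^{n+\frac{1}{2}}_{i})(D_{h})_{ii}\in[0,\beta_{\max}]$, $\beta_{\max}:=2L\varepsilon^{2}\Dt_{n+1}/h^{2}$. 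Combined with $\|\vPhi^{n}\|_{\infty}\leq 1$, this produces the uniform row-wise bound $|\alpha-\beta_{i}|+\beta_{i}\leq\max(\alpha,\,2\beta_{\max}-\alpha)$.

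To close the induction, it remains to compare the two cases. When $S_{2}=0$, the restriction \eqref{res_tau} is algebraically equivalent to $\alpha\geq\beta_{\max}$, which collapses the above maximum to $\alpha$; then $a\|\vPhi^{n+1}\|_{\infty}\leq\alpha+S_{1}\Dt_{n+1}=1+\frac{S_{1}\Dt_{n+1}}{2}=a$, giving $\|\vPhi^{n+1}\|_{\infty}\leq 1$. When $S_{2}>0$ one cannot ensure $\alpha\geq 0$, so only the worst-case estimate $2\beta_{\max}-\alpha+S_{1}\Dt_{n+1}\leq a$ is available; a short manipulation turns this into the quadratic inequality $2S_{2}\Dt_{n+1}^{2}-(S_{1}+4L\varepsilon^{2}/h^{2})\,\Dt_{n+1}+2\geq 0$, which holds for every $\Dt_{n+1}>0$ precisely when its discriminant is non-positive, i.e.\ $S_{2}\geq(S_{1}/4+L\varepsilon^{2}/h^{2})^{2}$, exactly \eqref{res_s}. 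The main technical obstacle I anticipate is the row-wise step in the middle paragraph: one must treat a potentially negative $\alpha$ without discarding the symmetric $S_{2}\Dt_{n+1}^{2}$ appearing in both $a$ and $\alpha$, since it is precisely the matching of that quadratic term on the two sides of the inequality that makes the final discriminant condition reduce cleanly to \eqref{res_s}.
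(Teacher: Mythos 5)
Your proposal is correct and follows essentially the same route as the paper: induction combined with Lemma \ref{lem1} for the half-step, Lemma \ref{lemm2} applied to $A=aI-\frac{\varepsilon^{2}\Dt_{n+1}}{2}\Lambda^{n+\frac{1}{2}}D_{h}$, Lemma \ref{para} for the nonlinear stabilizer, and a transfer-matrix bound on $\alpha I+\frac{\varepsilon^{2}\Dt_{n+1}}{2}\Lambda^{n+\frac{1}{2}}D_{h}$, which is exactly $\Dt_{n+1}Q^{n+1}$ from the paper. Your row-wise bound $\max(\alpha,\,2\beta_{\max}-\alpha)$ is just an equivalent repackaging of the paper's step showing $Q^{n+1}\geq 0$ entrywise (indeed $2\beta_{\max}-\alpha+S_{1}\Dt_{n+1}\leq a$ is algebraically the same as $\alpha\geq\beta_{\max}$), and both lead to the same discriminant condition \eqref{res_s}.
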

\begin{proof}
 For any $1\leq n\leq N-1$, we assume $\|\vPhi^{k}\|_{\infty}\leq1$ for  $1\leq k\leq n.$
Using $\vPhi^{n+\frac{1}{2}} = {\rm BDF1}(\vPhi^n,\tau_{n+1}/2)$, $\|\vPhi^{n}\|_{\infty}\leq 1,$ and Lemma \ref{lem1}, we obtain $\|\vPhi^{n+\frac{1}{2}}\|_{\infty}\leq1.$
Thus, together with \eqref{CN2_tsor2}, Lemmas \ref{lemm2} and \ref{para}, we have
\brr\label{MBP}
&\dps\Big(\frac{1}{\Dt_{n+1}}+\frac{S_{1}}{2}+S_{2}\Dt_{n+1}\Big)\|\vPhi^{n+1}\|_{\infty}\\
&~~\leq\dps\Big\|\Big(\big(\frac{1}{\Dt_{n+1}}+\frac{S_{1}}{2}+S_{2}\Dt_{n+1}\big)I-\frac{\varepsilon^{2}}{2}\Lambda^{n+\frac{1}{2}}D_{h}\Big)\vPhi^{n+1}\Big\|_{\infty}\\[8pt]
&~~=\dps\|Q^{n+1}\vPhi^{n}+S_{1}\vPhi^{n+\frac{1}{2}}-f(\vPhi^{n+\frac{1}{2}})\|_{\infty}\\[4pt]
&~~\leq\dps\|Q^{n+1}\|_{\infty}\|\vPhi^{n}\|_{\infty}+\|S_{1}\vPhi^{n+\frac{1}{2}}-f(\vPhi^{n+\frac{1}{2}})\|_{\infty}\\[4pt]
&~~\leq\dps\|Q^{n+1}\|_{\infty}\|\vPhi^{n}\|_{\infty}+S_{1},\\
 \err
 where
 \bq\label{posi}
 Q^{n+1}:=\Big(\frac{1}{\Dt_{n+1}}-\frac{S_{1}}{2}+S_{2}\Dt_{n+1}\Big)I+\frac{\varepsilon^{2}}{2}\Lambda^{n+\frac{1}{2}}D_{h}.
 \eq
If $S_{2}=0$, it follows from \eqref{res_tau}, \eqref{posi}, and the definition of  $\Lambda^{n+\frac12}$ and $D_{h}$ that
\beq
 Q^{n+1}\geq 0,
\eeq
 which means that all the entries of $Q^{n+1}$ are nonnegative.
 If $S_{2}$ satisfies \eqref{res_s}, we can use \eqref{posi} and the definition of  $\Lambda^{n+\frac12}$ and $D_{h}$  to obtain
 \beq
  Q^{n+1}\geq \dps\Big(2\sqrt{S_{2}}-\frac{S_{1}}{2}\Big)I+\frac{\varepsilon^{2}}{2}\Lambda^{n+\frac{1}{2}}D_{h}\geq \frac{2L\varepsilon^{2}}{h^{2}}I+\frac{\varepsilon^{2}}{2}\Lambda^{n+\frac{1}{2}}D_{h}\geq0.
 \eeq
 Thus, it follows  for both of the above choice of $S_2$ that
 \bq\label{est_q}
 \|Q^{n+1}\|_{\infty}\leq\dps \frac{1}{\Dt_{n+1}}-\frac{S_{1}}{2}+S_{2}\Dt_{n+1},
 \eq
 where we have used the fact
$ \sum_{j=1}^{ M^{2}}\big(\Lambda^{n+\frac{1}{2}}D_{h}\big)_{i,j}=0$ for any $1\leq i\leq M^{2}.$
Combining \eqref{MBP} and \eqref{est_q} gives
\bry
\dps\Big(\frac{1}{\Dt_{n+1}}+\frac{S_{1}}{2}+S_{2}\Dt_{n+1}\Big)\|\vPhi^{n+1}\|_{\infty} \leq\;&\dps\|Q^{n+1}\|_{\infty}\|\vPhi^{n}\|_{\infty}+S_{1}\\
\leq\;&\dps\frac{1}{\Dt_{n+1}}+\frac{S_{1}}{2}+S_{2}\Dt_{n+1},
 \ery
 which leads to $\|\vPhi^{n+1}\|_{\infty}\leq1$.
\end{proof}
\begin{remark}
The requirement \eqref{res_s} implies that the selection of the stabilizing parameter $S_{2}$ depends on the size of spatial mesh size $h$.
For practical simulation problems, the interface width parameter $\varepsilon$ is  rather small and $h$ is  usually set  to be the same level of $\varepsilon$ to capture the phase interface, i.e., $\varepsilon/h=O(1)$, thus $S_{2}$ needs not be too large.
\end{remark}

\section{Error analysis and energy stability}
\setcounter{equation}{0}

In this section, we perform error analysis and energy stability of the proposed CN scheme \eqref{CN_2} for the Allen-Cahn equation \eqref{prob} with a
general mobility. Note below that $C$ and $C_i$ denote some generic positive constants independent of $h$ and $\Dt_{n}$.
\subsection{ Discrete $H^{1}$ error estimate and energy stability for the constant mobility case}

In this subsection, the discrete $H^{1}$ error estimate and energy stability of the CN scheme \eqref{CN_2} are investigated for the constant mobility case.
Without loss of generality, we assume $M(\phi)\equiv1$, which leads to $f(\phi) = F'(\phi) = \phi^3-\phi$, the condition \eqref{eqn1_3} being $S_{1}\geq 2$, and the condition \eqref{res_s}  being $ S_{2}\geq \Big(\frac{S_1}{4}+\frac{\varepsilon^{2}}{h^{2}}\Big)^{2}$.

Define the error functions $e^{n}=\Phi^{n}-\Phi(t_{n})$ and $e^{n+\frac{1}{2}}=\Phi^{n+\frac{1}{2}}-\Phi(t_{n+\frac{1}{2}})$ with $\Phi(t)= \Pi_{\mathcal{C}_h}\phi(t)$.
Then, a discrete $H^{1}$ error estimate for the CN scheme \eqref{CN_2} is established in the following theorem with a reasonable regularity requirement on the exact solution $\phi$.
\begin{theorem}\label{th1}
 Assume that $S_{1}\geq 2$, $S_{2}\geq \Big(\frac{S_1}{4}+\frac{\varepsilon^{2}}{h^{2}}\Big)^{2}$, and
$$\phi\in W^{3,\infty}(0,T;L^{\infty}(\Omega))\cap L^{\infty}(0,T;W^{4,\infty}(\Omega)).$$
Then it holds for the CN scheme \eqref{CN_2} in the constant mobility case that
\bq\label{equa4_1}
\dps\varepsilon^{2}\|\nabla_{h} e^{n+1}\|^{2}_{h}+S_{1}\|e^{n+1}\|_h^{2}\leq \dps C_{1}\exp(C_{2}T)\Big(\Dt^{4}+h^{4}\Big)
\eq
for all $0\leq n\leq N-1$.
\end{theorem}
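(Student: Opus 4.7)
The proof follows a standard consistency-plus-stability framework adapted to the two-stage structure \eqref{eqn41}--\eqref{eqn4}. First, I insert the restriction $\phi(t_k)|_{\mathcal{C}_h}$ into the two sub-steps, expand around $t_{n+1/2}$, and use the PDE identity $\phi_t=\varepsilon^2\Delta\phi-F'(\phi)$ together with the classical $O(h^2)$ accuracy of the central finite difference Laplacian. Under the stated regularity this gives a BDF1 truncation error $R^{n+1/2}$ with $\|R^{n+1/2}\|_h\le C(\Dt+h^2)$ and a CN truncation error $\tilde R^{n+1}$ with $\|\tilde R^{n+1}\|_h\le C(\Dt^2+h^2)$. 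The two stabilization terms contribute only at order $\Dt^2$, since $S_1\bigl(\tfrac{\phi(t_{n+1})+\phi(t_n)}{2}-\phi(t_{n+1/2})\bigr)$ and $S_2\Dt_{n+1}\bigl(\phi(t_{n+1})-\phi(t_n)\bigr)$ are both $O(\Dt^2)$ by Taylor expansion.

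Next I derive a one-step bound for the midpoint error $e^{n+1/2}=\Phi^{n+1/2}-\phi(t_{n+1/2})|_{\mathcal{C}_h}$. Subtracting the truncated BDF1 from \eqref{eqn41}, testing with $e^{n+1/2}$, applying Lemma \ref{intpat} for the Laplacian, and invoking the Lipschitz estimate $|f(\Phi^n)-f(\phi(t_n))|\le L_f|e^n|$ --- valid because Theorem \ref{thmMBP} gives $\|\Phi^n\|_\infty\le 1$ and the continuous MBP gives $|\phi|\le 1$ --- a routine Young absorption yields
\[
\|e^{n+1/2}\|_h^2\le C\bigl(\|e^n\|_h^2+\Dt^4+h^4\bigr).
\]
The crucial point here is that the BDF1 residual, although only $O(\Dt+h^2)$, enters the bound after pre-multiplication by $\Dt_{n+1}$, so its contribution is at worst $\Dt_{n+1}^2\|R^{n+1/2}\|_h^2=O(\Dt^4+\Dt^2 h^4)$, which is of the right order since $\Dt^2 h^4\le h^4$.

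For the main estimate I subtract the truncated CN scheme from \eqref{eqn4} to form an error equation in $e^{n+1}-e^n$ and take its discrete inner product with $\tfrac{e^{n+1}-e^n}{\Dt_{n+1}}$. Lemma \ref{intpat} and the polarization identity $\langle a+b,a-b\rangle_\Omega=\|a\|_h^2-\|b\|_h^2$ produce, after multiplication by $2\Dt_{n+1}$, the telescoping left-hand side
\[
\varepsilon^2\bigl(\|\nabla_h e^{n+1}\|_h^2-\|\nabla_h e^n\|_h^2\bigr)+S_1\bigl(\|e^{n+1}\|_h^2-\|e^n\|_h^2\bigr)+\tfrac{2}{\Dt_{n+1}}\|e^{n+1}-e^n\|_h^2+2S_2\Dt_{n+1}\|e^{n+1}-e^n\|_h^2.
\]
The three cross terms on the right --- the nonlinear piece $-2\langle f(\Phi^{n+1/2})-f(\phi(t_{n+1/2})),e^{n+1}-e^n\rangle_\Omega$, the $S_1$-mismatch $2S_1\langle e^{n+1/2},e^{n+1}-e^n\rangle_\Omega$, and the truncation $-2\langle\tilde R^{n+1},e^{n+1}-e^n\rangle_\Omega$ --- are each controlled by Cauchy--Schwarz and Young's inequality in the form $C\Dt_{n+1}\bigl(\|e^{n+1/2}\|_h^2+\|\tilde R^{n+1}\|_h^2\bigr)+\tfrac{1}{\Dt_{n+1}}\|e^{n+1}-e^n\|_h^2$, the second summand being absorbed by the $\tfrac{2}{\Dt_{n+1}}\|e^{n+1}-e^n\|_h^2$ term on the left. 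Substituting the midpoint bound from Step~2 and using $\|\tilde R^{n+1}\|_h^2\le C(\Dt^4+h^4)$ leads to
\[
\varepsilon^2\bigl(\|\nabla_h e^{n+1}\|_h^2-\|\nabla_h e^n\|_h^2\bigr)+S_1\bigl(\|e^{n+1}\|_h^2-\|e^n\|_h^2\bigr)\le C\Dt_{n+1}\|e^n\|_h^2+C\Dt_{n+1}\bigl(\Dt^4+h^4\bigr),
\]
and summing from $n=0$ to $N-1$, using $e^0=0$ from $\Phi^0=\Pi_{\mathcal{C}_h}\phi_0$, and applying discrete Gronwall produces the desired bound with constant $C_1\exp(C_2 T)$.

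The main obstacle is the second step. Since the predictor \eqref{eqn41} is only first order in time, one might worry that $\Phi^{n+1/2}$ is too inaccurate to allow an overall second-order scheme. The resolution is that $\Phi^{n+1/2}$ enters the CN corrector only through the Lipschitz function $f$ (mobility being constant), so its contribution to the error equation is proportional to $\|e^{n+1/2}\|_h$; and the BDF1 local consistency residual is pre-multiplied by $\Dt_{n+1}$ in the one-step stability argument, yielding $\|e^{n+1/2}\|_h=O(\|e^n\|_h+\Dt^2+h^2)$ --- exactly the order needed for the CN step to remain second order overall and for the Gronwall loop to close cleanly.
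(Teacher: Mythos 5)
Your proposal is correct and follows essentially the same route as the paper's proof: the same predictor/corrector error equations, the same test functions ($e^{n+\frac12}$ for the half-step and $2(e^{n+1}-e^n)$ for the CN step), the same key observation that the first-order predictor residual is harmless because it enters squared with a prefactor $\Dt_{n+1}^2$, yielding $\|e^{n+\frac12}\|_h^2\le C(\|e^n\|_h^2+\Dt^4+\Dt^2h^4)$, and the same telescoping-plus-Gronwall closure. The only quibble is bookkeeping: if each of the three cross terms donates a full $\frac{1}{\Dt_{n+1}}\|e^{n+1}-e^n\|_h^2$ to the right-hand side, the total $\frac{3}{\Dt_{n+1}}\|e^{n+1}-e^n\|_h^2$ exceeds the $\frac{2}{\Dt_{n+1}}\|e^{n+1}-e^n\|_h^2$ available on the left, so the Young constants must be taken as $\frac{2}{3\Dt_{n+1}}$ (or the residuals lumped into a single term $R^{n+1}_2$, as the paper does) --- a one-line fix that does not affect the argument.
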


\begin{proof}
We use $\|\Phi\|_{\infty}\leq1, \|\Phi^{n}\|_{\infty}\leq1$ (by the discrete MBP stated in Theorem \ref{thmMBP}), and $f(\cdot)\in C^{1}(\R)$ to obtain that
\bq\label{equ1}
\max\{\|f(\Phi)\|_{\infty},\|f^{'}(\Phi)\|_{\infty},\|f(\Phi^{n})\|_{\infty},\|f^{'}(\Phi^{n})\|_{\infty}\}\leq C_{3}
\eq
 for all $n=0,1,\cdots, N.$
It follows from \eqref{prob} and \eqref{CN_2} that the error equations of $e^{n+\frac{1}{2}}$ and $e^{n+1}$ read
 \begin{subequations}\label{err}
  \begin{align}
 &\frac{ e^{n+\frac{1}{2}}-e^{n}}{\Dt_{n+1}/2}-\varepsilon^{2}\Delta_{h} e^{n+\frac{1}{2}}+S_{1} e^{n+\frac{1}{2}}\nonumber\\
 &\qquad=S_{1} e^{n}-S_{1}(\Phi(t_{n+\frac{1}{2}})-\Phi(t_{n}))
 +f(\Phi(t_{n+\frac{1}{2}}))-f(\Phi^{n})+T_{1}^{n}+T_{2}^{n},\label{err_2}\\
&\dps \frac{e^{n+1}-e^{n}}{\Dt_{n+1}}-\varepsilon^{2}\Delta_{h}\frac{ e^{n+1}+e^{n}}{2}+S_{1}\frac{e^{n+1}+e^{n}}{2}+S_{2}\Dt_{n+1}(e^{n+1}-e^{n})\nonumber\\
&\qquad=S_{1} e^{n+\frac{1}{2}}-S_{1}\Big(\frac{\Phi(t_{n+1})+\Phi(t_{n})}{2}-\Phi(t_{n+\frac{1}{2}})\Big) \label{err_1}\\
&\qquad\quad-S_{2}\Dt_{n+1}(\Phi(t_{n+1})-\Phi(t_{n}))+f(\Phi(t_{n+\frac{1}{2}}))-f(\Phi^{n+\frac{1}{2}})+T_{3}^{n}+T_{4}^{n}\nonumber
 \end{align}
 \end{subequations}
 for $n=1,2,\cdots,N-1$. Let us denote the righthand sides of the above two equalities as $R^{n}_{1}$ and $R^{n}_{2}$, respectively.
For simplicity of expression, we also define the following  error terms $\{T^{n}_{i}\}_{i=1}^4$:
 \brr\label{truerrs}
 T^{n}_{1}&\tps=\Phi_{t}(t_{n+\frac{1}{2}})-\frac{\Phi(t_{n+\frac{1}{2}})-\Phi(t_{n})}{\Dt_{n+1}/2},\quad
  T^{n}_{2}\tps=\varepsilon^2\Delta \Phi(t_{n+\frac{1}{2}})-\varepsilon^2\Delta_{h}\Phi(t_{n+\frac{1}{2}}),\\[5pt]
T^{n}_{3}&\tps=\Phi_{t}(t_{n+\frac{1}{2}})-\frac{\Phi(t_{n+1})-\Phi(t_{n})}{\Dt_{n+1}},\quad
  T^{n}_{4}\tps=\varepsilon^2\Delta \Phi(t_{n+\frac{1}{2}})-\varepsilon^2\Delta_{h}\frac{\Phi(t_{n+1})+\Phi(t_{n})}{2},\\
\err
Taking the discrete $L^{2}$ inner products of \eqref{err_2} and \eqref{err_1} with $\Dt_{n+1}e^{n+\frac{1}{2}}$ and $2(e^{n+1}-e^{n})$, respectively, and using Lemma \ref{intpat} and the identity $2a(a-b)=a^{2}-b^{2}+(a-b)^{2}$, we obtain
 \begin{subequations}\label{err1}
 \begin{align}
 &\dps \|e^{n+\frac{1}{2}}\|^{2}_{h}-\|e^{n}\|^{2}_{h}+\Dt_{n+1}\varepsilon^{2}\|\nabla_{h} e^{n+\frac{1}{2}}\|^{2}_{h}+\Dt_{n+1}S_{1} \|e^{n+\frac{1}{2}}\|^{2}_{h}\leq\dps \Dt_{n+1}\big<R^{n}_{1},e^{n+\frac{1}{2}}\big>_{\Omega},\label{err1_2}\\[5pt]
 &\dps {\color{black}\frac{2\|e^{n+1}-e^{n}\|^{2}_{h}}{\Dt_{n+1}}}+\varepsilon^{2}\big[\|\nabla_{h} e^{n+1}\|^{2}_{h}-\|\nabla_{h} e^{n}\|^{2}_{h}\big]+S_{1}\big[ \|e^{n+1}\|^{2}_{h}-\|e^{n}\|^{2}_{h}\big]\nonumber\\
 &\qquad\dps+S_{2}\Dt_{n+1}\|e^{n+1}-e^{n}\|^{2}_{h}\leq2\big<R^{n+1}_{2},e^{n+1}-e^{n}\big>_{\Omega}.\label{err1_1}
 \end{align}
 \end{subequations}
 From \eqref {equ1}, it follows that
 $$\|f(\Phi(t_{n+\frac{1}{2}}))-f(\Phi^{n+\frac{1}{2}})\|^{2}_{h}\leq (C_{3})^2\|e^{n+\frac{1}{2}}\|^{2}_{h}.$$
Then we obtain the following estimate for the righthand side of \eqref{err1_1} by using Cauchy-Schwarz inequality and Young's inequality
\brr\label{eqnn2}
&2\big<R^{n+1}_{2},e^{n+1}-e^{n}\big>_{\Omega}\\
&\quad\dps\leq \frac{\Dt_{n+1}\|R^{n+1}_{2}\|^{2}_{h}}{2}+\frac{2\|e^{n+1}-e^{n}\|^{2}_{h}}{\Dt_{n+1}}\\
&\quad\leq\dps \Big(S^{2}_{1}\|e^{n+\frac{1}{2}}\|^{2}_{h}+\frac{S^{2}_{1}|\Omega|\|\phi\|^{2}_{W^{2,\infty}(0,T;L^{\infty}(\Omega))}}{16}\Dt_{n+1}^{4}+S^{2}_{2}|\Omega|\|\phi\|^{2}_{W^{1,\infty}(0,T;L^{\infty}(\Omega))}\Dt^{4}_{n+1}\\
&\quad\quad\dps+(C_{3})^2\|e^{n+\frac{1}{2}}\|^{2}_{h}+\|T^{n}_{3}\|^{2}_{h}+\|T^{n}_{4}\|^{2}_{h}\Big)\frac{\Dt_{n+1}}{2}+\frac{2\|e^{n+1}-e^{n}\|^{2}_{h}}{\Dt_{n+1}}\\[5pt]
&\quad\leq\dps C_{4}\Dt_{n+1}\|e^{n+\frac{1}{2}}\|^{2}_{h}+\frac{\Dt_{n+1}}{2}(\|T^{n}_{3}\|^{2}_{h}+\|T^{n}_{4}\|^{2}_{h})+C_{5}\Dt^{5}_{n+1}+\frac{2\|e^{n+1}-e^{n}\|^{2}_{h}}{\Dt_{n+1}},
\err
where $C_{4}:=(S^{2}_{1}+(C_{3})^{2})/2$ and
\beq
C_{5}=\frac{S^{2}_{1}|\Omega|\|\phi\|^{2}_{W^{2,\infty}(0,T;L^{\infty}(\Omega))}}{32}+\frac{S^{2}_{2}|\Omega|}{2}\|\phi\|^{2}_{W^{1,\infty}(0,T;L^{\infty}(\Omega))}.
\eeq
Therefore, we deduce from \eqref{err1_1} and \eqref{eqnn2} that
\brr\label{eqt1}
&\dps\varepsilon^{2}\big[\|\nabla_{h} e^{n+1}\|^{2}_{h}-\|\nabla_{h} e^{n}\|^{2}_{h}\big]+S_{1}\big[ \|e^{n+1}\|^{2}_{h}-\|e^{n}\|^{2}_{h}\big]\\[11pt]
&\qquad\leq \dps C_{4}\Dt_{n+1}\|e^{n+\frac{1}{2}}\|^{2}_{h}+\frac{\Dt_{n+1}}{2}(\|T^{n}_{3}\|^{2}_{h}+\|T^{n}_{4}\|^{2}_{h})+C_{5}\Dt^{5}_{n+1}.
\err
Similarly, we can obtain the following estimate from \eqref{err1_2}
\brr\label{eqnn1}
&\dps\|e^{n+\frac{1}{2}}\|^{2}_{h}-\|e^{n}\|^{2}_{h}+\Dt_{n+1}\varepsilon^{2}\|\nabla_{h} e^{n+\frac{1}{2}}\|^{2}_{h}+\Dt_{n+1}S_{1} \|e^{n+\frac{1}{2}}\|^{2}_{h}\\[5pt]
&\qquad\leq\dps \frac{\Dt_{n+1}^{2}}{2}\Big([S^{2}_{1}+(C_{3})^{2}]\|e^{n}\|^{2}_{h}+\frac{[S^{2}_{1}+(C_{3})^{2}]|\Omega|\|\phi\|^{2}_{W^{1,\infty}(0,T;L^{\infty}(\Omega))}}{4}\Dt_{n+1}^{2}\\[5pt]
&\qquad\quad\dps+\|T_{1}^{n}\|^{2}_{h}+\|T_{2}^{n}\|^{2}_{h}\Big)+\tps\frac12{\|e^{n+\frac{1}{2}}\|^{2}_{h}}\\
\err
where we have used
\bry
\|f(\Phi(t_{n+\frac{1}{2}}))-f(\Phi^{n})\|^{2}_{h}\leq\;&\dps\|f(\Phi(t_{n+\frac{1}{2}}))-f(\Phi(t_{n}))\|^{2}_{h}+\|f(\Phi(t_{n}))-f(\Phi^{n})\|^{2}_{h}\\[4pt]
\leq\;&\dps (C_{3})^2\big(\|\Phi(t_{n+\frac{1}{2}})-\Phi(t_{n})\|^{2}_{h}+\|e^{n}\|^{2}_{h}\big)\\[5pt]
\leq\;&\dps (C_{3})^2\Big(\frac{\Dt_{n+1}^{2}}{4}|\Omega|\|\phi\|^{2}_{W^{1,\infty}(0,T;L^{\infty}(\Omega))}+\|e^{n}\|^{2}_{h}\Big).
\ery
Then it follows from \eqref{eqnn1} and the definition of $C_{4}$ in \eqref{eqnn2} that
\brr\label{eqt2}
&\dps \|e^{n+\frac{1}{2}}\|^{2}_{h}
\leq\dps 2\|e^{n}\|^{2}_{h}+\Dt^{2}_{n+1}\big(2C_{4}\|e^{n}\|^{2}_{h}+C_{6}\Dt^{2}_{n+1}+\|T_{1}^{n}\|^{2}_{h}+\|T_{2}^{n}\|^{2}_{h}\big)
\err
with $C_{6}:=\frac{C_{4}}{2}|\Omega|\|\phi_{t}\|^{2}_{W^{1,\infty}(0,T;L^{\infty}(\Omega))}$.
Substituting the estimate \eqref{eqt2} for $e^{n+\frac{1}{2}}$ into \eqref{eqt1}, we have
\brr\label{eqt3}
&\varepsilon^{2}\big[\|\nabla_{h} e^{n+1}\|^{2}_{h}-\|\nabla_{h} e^{n}\|^{2}_{h}\big]+S_{1}\big[ \|e^{n+1}\|^{2}_{h}-\|e^{n}\|^{2}_{h}\big]\\[9pt]
&\qquad\leq\dps C_7\Dt_{n+1}\|e^{n}\|^{2}_{h}+(C_{5}+C_{4}C_{6})\Dt^{5}_{n+1}+C_{4}\Dt^{3}_{n+1}(\|T_{1}^{n}\|^{2}_{h}+\|T^{n}_{2}\|^{2}_{h})\\[5pt]
&\qquad\quad\dps+\frac{\Dt_{n+1}}{2}(\|T^{n}_{3}\|^{2}_{h}+\|T^{n}_{4}\|^{2}_{h}),
\err
where $C_{7}:=2C_{4}(1+C_{4}\Dt^{2}).$
For the truncation errors $T^{n}_{i}, i=1,2,3,4$, we have the following estimates (see \cite{LSR19,LTZ20}):
\brr\label{equ8}
\|T^{n}_{1}\|^{2}_{h}\leq &\;\dps \frac{|\Omega|}{16}\Dt_{n+1}^{2}\|\phi\|^{2}_{W^{2,\infty}(0,T;L^{\infty}(\Omega))},\\[5pt]
\|T^{n}_{2}\|^{2}_{h}\leq &\;\dps \frac{\varepsilon^{4}|\Omega|}{36}h^{4}\|\phi\|^{2}_{L^{\infty}(0,T;W^{4,\infty}(\Omega))},\\[5pt]
\|T^{n}_{3}\|^{2}_{h}\leq &\;\dps \frac{|\Omega|}{24^2}\Dt_{n+1}^{4}\|\phi\|^{2}_{W^{3,\infty}(0,T;L^{\infty}(\Omega))},\\[5pt]
\|T^{n}_{4}\|^{2}_{h}\leq &\;\dps \varepsilon^{4}\Big[\frac{|\Omega|}{64}\Dt_{n+1}^{4}\|\phi\|^{2}_{W^{3,\infty}(0,T;W^{2,\infty}(\Omega))}+\frac{|\Omega|}{36}h^{4}\|\phi\|^{2}_{L^{\infty}(0,T;W^{4,\infty}(\Omega))}\Big].
\err
Thus, we sum up the inequality \eqref{eqt3} from 0 to $n$ to derive that
\bq\label{equ9}
\dps\varepsilon^{2}\|\nabla_{h}e^{n+1}\|^{2}_{h}+S_{1}\|e^{n+1}\|^{2}_{h}\leq\dps C_{7}\sum_{k=1}^{n}\Dt_{k+1}\|e^{k}\|^{2}_{h}+C_{8}\Dt^{4}+C_{9}h^{4},
\eq
where
\bry
C_{8}:=\;&\dps T\Big[C_{5}+C_{4}C_{6}+ \frac{C_{4}|\Omega|}{16}\|\phi\|^{2}_{W^{2,\infty}(0,T;L^{\infty}(\Omega))}+\frac{|\Omega|}{2\times24^2}\|\phi\|^{2}_{W^{3,\infty}(0,T;L^{\infty}(\Omega))}\\
&\dps+\frac{|\Omega|}{128}\varepsilon^{4}\|\phi\|^{2}_{W^{3,\infty}(0,T;W^{2,\infty}(\Omega))}\Big],\\
C_{9}:=\;&\dps \varepsilon^{4}T\Big[C_{4}T+\frac{1}{2}\Big]\frac{|\Omega|}{36}\|\phi\|^{2}_{L^{\infty}(0,T;W^{4,\infty}(\Omega))}.
\ery
Using \eqref{equ9} and the discrete Gronwall's lemma, we then obtain the desired estimate \eqref{equa4_1}.
 \end{proof}

The energy stability of the proposed CN scheme \eqref{CN_2} is established in the following theorem by using its the MBP property (Theorem \ref{thmMBP}) and the discrete $H^1$ error estimate (Theorem \ref{th1}).
\begin{theorem}\label{st_the}
Under the assumption of Theorem \ref{th1}, the CN scheme \eqref{CN_2} in the constant mobility case satisfies
\bq\label{eq2_1}
\dps E_{h}(\Phi^{n+1})-E_{h}(\Phi^{n})\leq C\Dt_{n+1}(h^{4}+\Dt^{2}),
\eq
and consequently,
\bq\label{equn2}
\dps E_{h}(\Phi^{n+1})\leq E_{h}(\Phi^{0})+C T(h^{4}+\Dt^{2})
\eq
for all $0\leq n\leq N-1$,  i.e., the discrete free energy is uniformly  bounded by the energy at the initial time plus a constant.
\end{theorem}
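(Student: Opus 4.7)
My plan is to derive a discrete energy identity by testing the scheme \eqref{eqn4} against $\Phi^{n+1}-\Phi^{n}$, extract an expression for $E_{h}(\Phi^{n+1})-E_{h}(\Phi^{n})$, and then bound the resulting consistency and cross terms with the help of the MBP (Theorem \ref{thmMBP}) and the $H^{1}$-error estimate \eqref{equa4_1}. The one-step bound \eqref{eq2_1} will then yield \eqref{equn2} by a telescoping sum.

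First I would take the discrete $L^{2}$ inner product of \eqref{eqn4} with $\Phi^{n+1}-\Phi^{n}$ (in the constant mobility case, so $f=F'$). Lemma \ref{intpat} converts the Laplacian term into $\frac{\varepsilon^{2}}{2}(\|\nabla_{h}\Phi^{n+1}\|_{h}^{2}-\|\nabla_{h}\Phi^{n}\|_{h}^{2})$. Pairing this with the pointwise Taylor expansion
\[
F(\Phi^{n+1})-F(\Phi^{n})=F'\!\left(\tfrac{\Phi^{n+1}+\Phi^{n}}{2}\right)(\Phi^{n+1}-\Phi^{n})+r_{1}^{n},\qquad |r_{1}^{n}|\leq C\,|\Phi^{n+1}-\Phi^{n}|^{3},
\]
and rearranging, I expect to arrive at
\[
E_{h}(\Phi^{n+1})-E_{h}(\Phi^{n})=-\tfrac{\|\Phi^{n+1}-\Phi^{n}\|_{h}^{2}}{\Dt_{n+1}}-S_{2}\Dt_{n+1}\|\Phi^{n+1}-\Phi^{n}\|_{h}^{2}+J^{n}+\big<r_{1}^{n},1\big>_{\Omega},
\]
where $J^{n}:=\big<(f'(\eta)-S_{1})\bigl(\tfrac{\Phi^{n+1}+\Phi^{n}}{2}-\Phi^{n+\frac12}\bigr),\Phi^{n+1}-\Phi^{n}\big>_{\Omega}$, with $f'(\eta)$ produced by the mean value theorem applied to $f(\frac{\Phi^{n+1}+\Phi^{n}}{2})-f(\Phi^{n+\frac12})$.

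The key quantity for controlling $J^{n}$ is $\|\tfrac{\Phi^{n+1}+\Phi^{n}}{2}-\Phi^{n+\frac12}\|_{h}$. I would split
\[
\tfrac{\Phi^{n+1}+\Phi^{n}}{2}-\Phi^{n+\frac12}=\Bigl[\tfrac{\Phi(t_{n+1})+\Phi(t_{n})}{2}-\Phi(t_{n+\frac12})\Bigr]+\tfrac{e^{n+1}+e^{n}}{2}-e^{n+\frac12},
\]
where the first bracket is $O(\Dt_{n+1}^{2})$ by a Taylor expansion using $\phi\in W^{3,\infty}(0,T;L^{\infty}(\Omega))$, and the error contribution is $O(\Dt^{2}+h^{2})$ by combining \eqref{equa4_1} with the intermediate-step bound \eqref{eqt2} for $\|e^{n+\frac12}\|_{h}$. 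Since MBP bounds $|f'(\eta)|$ uniformly, Cauchy-Schwarz followed by Young's inequality on $J^{n}$ peels off a portion absorbed into $\tfrac{1}{2\Dt_{n+1}}\|\Phi^{n+1}-\Phi^{n}\|_{h}^{2}$ and leaves a residual of order $\Dt_{n+1}(\Dt^{4}+h^{4})$.

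The step I expect to be the main obstacle is the cubic remainder $\langle r_{1}^{n},1\rangle_{\Omega}$: one only has $|\langle r_{1}^{n},1\rangle_{\Omega}|\lesssim \|\Phi^{n+1}-\Phi^{n}\|_{\infty}\|\Phi^{n+1}-\Phi^{n}\|_{h}^{2}$, and MBP alone gives just $\|\Phi^{n+1}-\Phi^{n}\|_{\infty}\leq 2$, which is not itself small. To close the argument I plan to absorb this remainder into $\tfrac{1}{\Dt_{n+1}}\|\Phi^{n+1}-\Phi^{n}\|_{h}^{2}$ under a mild smallness assumption on $\Dt_{n+1}$ (alternatively one can push the requirement onto $S_{2}$ via the $-S_{2}\Dt_{n+1}\|\Phi^{n+1}-\Phi^{n}\|_{h}^{2}$ term). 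Once this absorption is secured, the coercive contributions dominate and \eqref{eq2_1} follows. Summing from $0$ to $n$ yields $E_{h}(\Phi^{n+1})-E_{h}(\Phi^{0})\leq C\sum_{k=0}^{n}\Dt_{k+1}(h^{4}+\Dt^{2})\leq CT(h^{4}+\Dt^{2})$, which is exactly \eqref{equn2}.
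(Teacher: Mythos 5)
Your proposal is correct and follows essentially the same route as the paper: test \eqref{eqn4} against $\Phi^{n+1}-\Phi^{n}$, control $\big<F(\Phi^{n+1})-F(\Phi^{n}),1\big>_{\Omega}$ up to a quadratic remainder, bound the cross terms through $\big\|\frac{\Phi^{n+1}+\Phi^{n}}{2}-\Phi^{n+\frac12}\big\|_{h}$ and $\|\Phi^{n+1}-\Phi^{n+\frac12}\|_{h}$ using Theorem \ref{th1} and \eqref{eqt2}, absorb the remainder with the coercive term, and telescope. The only substantive difference is cosmetic: the paper expands the potential around the endpoint $\Phi^{n+1}$ via the algebraic inequality \eqref{equnt1}, producing the remainder $\frac12\|\Phi^{n+1}-\Phi^{n}\|^{2}_{h}$ in \eqref{eqnt2}, whereas you expand around the midpoint and collect the $f$-difference and the $S_{1}$-term into a single cross term $J^{n}$.

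One correction worth making: the ``mild smallness assumption on $\Dt_{n+1}$'' you hedge with at the end is not needed, and imposing it would prove a weaker statement than the theorem claims (which has no time-step restriction). By the MBP, $\|\Phi^{n+1}-\Phi^{n}\|_{\infty}\leq 2$ and $\max_{\rho\in[-1,1]}|F'''(\rho)|=6$, so the midpoint-rule remainder satisfies $|\big<r_{1}^{n},1\big>_{\Omega}|\leq \frac{6}{24}\cdot 2\,\|\Phi^{n+1}-\Phi^{n}\|^{2}_{h}=\frac12\|\Phi^{n+1}-\Phi^{n}\|^{2}_{h}$, i.e., exactly the size of the paper's remainder in \eqref{eqnt2}. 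After Young's inequality the surviving coercive coefficient is $\frac{1}{2\Dt_{n+1}}+S_{2}\Dt_{n+1}\geq 2\sqrt{S_{2}/2}$ by AM--GM, and since the standing assumptions already give $S_{1}\geq 2$ and $S_{2}\geq\big(\frac{S_{1}}{4}+\frac{\varepsilon^{2}}{h^{2}}\big)^{2}\geq\frac14$, this is at least $\frac{1}{\sqrt{2}}>\frac12$ for every $\Dt_{n+1}>0$; this is precisely the paper's estimate \eqref{eqnt5}. Hence the remainder is absorbed unconditionally and neither a time-step restriction nor any condition on $S_{2}$ beyond \eqref{res_s} is required.
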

\begin{proof}
We take the discrete $L^{2}$-inner product of \eqref{eqn4} with $\Phi^{n+1}-\Phi^{n}$, to obtain that
\brr\label{eqt4}
&\dps \big[\frac{1}{\Dt_{n+1}}+S_{2}\Dt_{n+1}\big]\|\Phi^{n+1}-\Phi^{n}\|^{2}_{h}+\frac{\varepsilon^{2}}{2}\big(\|\nabla_{h}\Phi^{n+1}\|^{2}_{h}-\|\nabla_{h}\Phi^{n}\|^{2}_{h}\big)\\[5pt]
&\qquad=\dps-\big<f(\Phi^{n+\frac{1}{2}}),\Phi^{n+1}-\Phi^{n}\big>_{\Omega}-S_{1}\big<\frac{\Phi^{n+1}+\Phi^{n}}{2}-\Phi^{n+\frac{1}{2}},\Phi^{n+1}-\Phi^{n}\big>_{\Omega}.
\err
Noting that
\brr\label{equnt1}
 (a^{3}-a)(a-b)&=\dps a^{3}(a-b)-a(a-b)\\
 &\geq\dps \frac{a^{4}-b^{4}}{4}-\frac{1}{2}\big(a^{2}-b^{2}+(a-b)^{2}\big)\\[5pt]
 &=\dps\frac{(a^{2}-1)^{2}}{4}-\frac{(b^{2}-1)^{2}}{4}-\frac{(a-b)^{2}}{2},
 \err
 we deduce
 \bq\label{eqnt2}
 \big<F(\Phi^{n+1})-F(\Phi^{n}),1\big>_{\Omega}\leq\dps \big<f(\Phi^{n+1}), \Phi^{n+1}-\Phi^{n}\big>_{\Omega}+\frac{1}{2}\|\Phi^{n+1}-\Phi^{n}\|^{2}_{h}.
 \eq
 From \eqref{eqt4}, \eqref{eqnt2} and the definition of $E_{h}(\Phi^{n})$, it {\color{black}follows} that
 \bry
 &E_{h}(\Phi^{n+1})-E_{h}(\Phi^{n})\\
 &\quad=\dps\frac{\varepsilon^{2}}{2}\big(\|\nabla_{h}\Phi^{n+1}\|^{2}_{h}-\|\nabla_{h}\Phi^{n}\|^{2}_{h}\big)+\big<F(\Phi^{n+1})-F(\Phi^{n}),1\big>_{\Omega}\\
 &\quad\leq\dps\big<f(\Phi^{n+1})-f(\Phi^{n+\frac{1}{2}}),\Phi^{n+1}-\Phi^{n}\big>_{\Omega}-S_{1}\big<\frac{\Phi^{n+1}+\Phi^{n}}{2}-\Phi^{n+\frac{1}{2}},\Phi^{n+1}-\Phi^{n}\big>_{\Omega}\\[5pt]
 &\quad\quad\dps-\big[\frac{1}{\Dt_{n+1}}+S_{2}\Dt_{n+1}-\frac{1}{2}\big]\|\Phi^{n+1}-\Phi^{n}\|^{2}_{h}.
 \ery
 Furthermore, using the Cauchy-Schwarz inequality and Young's inequality, we obtain
  \brr\label{eqnt3}
 &E_{h}(\Phi^{n+1})-E_{h}(\Phi^{n})\\
 &\quad\leq\dps (C_{3})^{2}\Dt_{n+1}\|\Phi^{n+1}-\Phi^{n+\frac{1}{2}}\|^{2}_{h}+S^{2}_{1}\Dt_{n+1}\Big\|\frac{\Phi^{n+1}+\Phi^{n}}{2}-\Phi^{n+\frac{1}{2}}\Big\|^{2}_{h}\\[5pt]
 &\quad\quad\dps+\frac{\|\Phi^{n+1}-\Phi^{n}\|^{2}_{h}}{2\Dt_{n+1}}-\big[\frac{1}{\Dt_{n+1}}+S_{2}\Dt_{n+1}-\frac{1}{2}\big]\|\Phi^{n+1}-\Phi^{n}\|^{2}_{h}\\
 &\quad=\dps(C_{3})^{2}\Dt_{n+1}\|\Phi^{n+1}-\Phi^{n+\frac{1}{2}}\|^{2}_{h}+S^{2}_{1}\Dt_{n+1}\Big\|\frac{\Phi^{n+1}+\Phi^{n}}{2}-\Phi^{n+\frac{1}{2}}\Big\|^{2}_{h}\\[5pt]
 &\quad\quad\dps-\big[\frac{1}{2\Dt_{n+1}}+S_{2}\Dt_{n+1}-\frac{1}{2}\big]\|\Phi^{n+1}-\Phi^{n}\|^{2}_{h}.
 \err
 By the triangle inequality, we obtain that
 \brr\label{eqnt4}
 \|\Phi^{n+1}-\Phi^{n+\frac{1}{2}}\|^{2}_{h}=\;&\tps\|e^{n+1}+\Phi(t_{n+1})-\Phi(t_{n+\frac{1}{2}})-e^{n+\frac{1}{2}}\|^{2}_{h}\\[5pt]
 \leq \;&\tps\|e^{n+1}\|^{2}_{h}+\|e^{n+\frac{1}{2}}\|^{2}_{h}+\frac{\Dt^{2}_{n+1}|\Omega|}{4}\|\phi_{t}\|^{2}_{L^{\infty}(0,T;L^{\infty}(\Omega))},\\[5pt]
\tps \Big\|\frac{\Phi^{n+1}+\Phi^{n}}{2}-\Phi^{n+\frac{1}{2}}\Big\|^{2}_{h}=\;&\tps \Big\|\frac{e^{n+1}+e^{n}}{2}+\frac{\Phi(t_{n+1})+\Phi(t_{n})}{2}-\Phi(t_{n+\frac{1}{2}})+e^{n+\frac{1}{2}}\Big\|^{2}_{h}\\[5pt]
 \leq\;&\tps\|e^{n+1}\|^{2}_{h}+\|e^{n}\|^{2}_{h}+\|e^{n+\frac{1}{2}}\|^{2}_{h}+\frac{\Dt^{2}_{n+1}|\Omega|}{4}\|\phi_{t}\|^{2}_{L^{\infty}(0,T;L^{\infty}(\Omega))}.
 \err
Thus, it follows from \eqref{eqnt3} and \eqref{eqnt4} that
\brr\label{equn1}
E_{h}(\Phi^{n+1})-E_{h}(\Phi^{n})\leq\;&\dps C_{10}\Dt_{n+1}\big(\|e^{n}\|^{2}_{h}+\|e^{n+1}\|^{2}_{h}+\|e^{n+\frac{1}{2}}\|^{2}_{h}+\Dt^{2}_{n+1}\big)\\[4pt]
&\dps-\big[\frac{1}{2\Dt_{n+1}}+S_{2}\Dt_{n+1}-\frac{1}{2}\big]\|\Phi^{n+1}-\Phi^{n}\|^{2}_{h}.
 \err
 with
 \beq
 C_{10}=\dps\max\Big\{(C_{3})^{2},S^{2}_{1},\frac{(C_{3})^{2}|\Omega|}{4}\|\phi_{t}\|^{2}_{L^{\infty}(0,T;L^{\infty}(\Omega))},\frac{S^{2}_{1}|\Omega|}{4}\|\phi_{t}\|^{2}_{L^{\infty}(0,T;L^{\infty}(\Omega))}\Big\}.
 \eeq
Furthermore, since $S_{1}\geq 2$  and $S_{2}$ satisfying \eqref{res_s} in the constant mobility case, we  get
\bq\label{eqnt5}
\frac{1}{2\Dt_{n+1}}+S_{2}\Dt_{n+1}-\frac{1}{2}\geq 2\sqrt{\frac{S_{2}}{2}}-\frac{1}{2}>0.
\eq
Together with \eqref{equa4_1} and \eqref{eqt2}, we then obtain
\brr\label{equn1_1}
E_{h}(\Phi^{n+1})-E_{h}(\Phi^{n})&\leq\dps C_{10}\Dt_{n+1}\big(\|e^{n}\|^{2}_{h}+\|e^{n+1}\|^{2}_{h}+\|e^{n+\frac{1}{2}}\|^{2}_{h}+\Dt^{2}_{n+1}\big)\\[4pt]
&\leq\dps C\Dt_{n+1}(h^{4}+\Dt^{2}).
 \err
 Summing up the above inequality from 0 to $n$ gives the desired result \eqref{equn2}.
\end{proof}


\subsection{$L^{\infty}$ error estimate and energy stability for the general mobility case}

In this subsection, the discrete  $L^{\infty}$ error estimate and  energy stability of the CN scheme \eqref{CN_2} are investigated for the  case with a  general mobility $M(\phi)\geq M_0>0$.

\begin{theorem}\label{th3}
Assume $M(\cdot)\in C^{1}(\R)$, $S_{1}$ satisfies \eqref{eqn1_3}, $S_{2}$ satisfies \eqref{res_s}, and
$$\phi\in W^{3,\infty}(0,T;L^{\infty}(\Omega))\cap L^{\infty}(0,T;W^{4,\infty}(\Omega)).$$

 Then it holds  for the CN scheme \eqref{CN_2} in the general mobility case that
\bq\label{eqnn8}
\|\ve^{n+1}\|_{\infty}
\leq C_1 \exp\big(C_{2}T\big)\big(\Dt^{2}+h^{2}\big)
\eq
for all $0\leq n\leq N-1$.
\end{theorem}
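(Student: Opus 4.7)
The plan is to perform the analysis entirely in the discrete $L^{\infty}$ norm by reusing the diagonally dominant matrix mechanism behind the MBP proof of Theorem \ref{thmMBP}, rather than the energy technique used for Theorem \ref{th1}. This choice is natural here: with a solution-dependent mobility $M(\phi)$ the coefficient matrix $\Lambda^{n+\frac12}$ sits inside the principal part of \eqref{CN2_tsor2}, whereas in the $L^{\infty}$ framework it is simply a uniformly bounded multiplicative coefficient whose action is already controlled by Lemma \ref{lemm2}.

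First, I would derive the error equations for $\ve^{n+\frac12}$ and $\ve^{n+1}$ in the spirit of \eqref{err}. Beyond the usual truncation terms $T_{1}^{n},\dots,T_{4}^{n}$, the general mobility produces two new consistency terms at the CN step,
\[
\varepsilon^{2}\bigl[\Lambda^{n+\frac12}-M(\vPhi(t_{n+\frac12}))\bigr]D_{h}\tfrac{\vPhi(t_{n+1})+\vPhi(t_{n})}{2}\quad\text{and}\quad f(\vPhi(t_{n+\frac12}))-f(\vPhi^{n+\frac12}),
\]
together with analogous objects at $t_{n}$ for the BDF1 predictor. Because $M\in C^{1}(\R)$, since $\|\phi\|_{L^{\infty}}\le1$ by the continuous MBP, since $\|\vPhi^{k}\|_{\infty}\le1$ by Theorem \ref{thmMBP}, and since $\|D_{h}\vPhi(t)\|_{\infty}\le C$ by the assumed $W^{4,\infty}$ regularity, each of these differences is Lipschitz-controlled by $\|\ve^{n+\frac12}\|_{\infty}$ or $\|\ve^{n}\|_{\infty}$. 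Standard Taylor expansions supply the pointwise bounds $\|T_{1}^{n}\|_{\infty}+\|T_{2}^{n}\|_{\infty}\le C(\tau+h^{2})$ for the BDF1 step and $\|T_{3}^{n}\|_{\infty}+\|T_{4}^{n}\|_{\infty}\le C(\tau^{2}+h^{2})$ for the CN step.

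Next, I would recast the CN error equation in the matrix form
\[
\Big(\bigl(\tfrac{1}{\Dt_{n+1}}+\tfrac{S_{1}}{2}+S_{2}\Dt_{n+1}\bigr)I-\tfrac{\varepsilon^{2}}{2}\Lambda^{n+\frac12}D_{h}\Big)\ve^{n+1}=Q^{n+1}\ve^{n}+S_{1}\ve^{n+\frac12}+\vec R^{n},
\]
where $Q^{n+1}$ is exactly the matrix in \eqref{posi} and $\vec R^{n}$ collects the truncation and coefficient-difference terms above. Under \eqref{res_s} the left-hand side satisfies the hypotheses of Lemma \ref{lemm2}; combining this with the bound $\|Q^{n+1}\|_{\infty}\le a-S_{1}$ from \eqref{est_q} (with $a:=1/\Dt_{n+1}+S_{1}/2+S_{2}\Dt_{n+1}$) yields
\[
a\,\|\ve^{n+1}\|_{\infty}\le(a-S_{1})\|\ve^{n}\|_{\infty}+(S_{1}+C)\|\ve^{n+\frac12}\|_{\infty}+C(\tau^{2}+h^{2}).
\]
An identical argument applied to \eqref{CN2_tsor1} produces $\|\ve^{n+\frac12}\|_{\infty}\le(1+C\Dt_{n+1})\|\ve^{n}\|_{\infty}+C\Dt_{n+1}(\tau+h^{2})$. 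Substituting this into the CN inequality, dividing through by $a\ge 1/\Dt_{n+1}$, and invoking the discrete Gronwall inequality delivers \eqref{eqnn8}.

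\textbf{Main obstacle.} The delicate point is the bookkeeping around the BDF1 predictor: $\vPhi^{n+\frac12}$ is only first-order locally, yet it enters the CN residual multiplied by the large stabilizer $S_{1}$. Second-order global accuracy is nevertheless preserved because division by $a\gtrsim 1/\Dt_{n+1}$ supplies an extra factor of $\Dt_{n+1}$, converting the predictor's local contribution $C\Dt_{n+1}(\tau+h^{2})$ into $C\Dt_{n+1}(\tau^{2}+\tau h^{2})$ per time step, which is exactly the size needed so that its telescoped sum matches the target $O(\tau^{2}+h^{2})$. A related technical point is that the coefficient difference $[\Lambda^{n+\frac12}-M(\vPhi(t_{n+\frac12}))]D_{h}\vPhi$ must be absorbed via $\|\ve^{n+\frac12}\|_{\infty}$ and not via $\|D_{h}\ve^{n+\frac12}\|_{\infty}$, which cannot be controlled in $L^{\infty}$; this is precisely where the smoothness of $\phi$ is used, to give a uniform bound on $\|D_{h}\vPhi(t)\|_{\infty}$.
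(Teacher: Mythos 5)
Your proposal is correct and follows essentially the same route as the paper's proof: the same $L^\infty$ error equations for the predictor and corrector, the same use of Lemma \ref{lemm2} together with the matrix $Q^{n+1}$ and the bound \eqref{est_q}, Lipschitz control of the mobility and nonlinear differences via the discrete MBP and $M\in C^1(\R)$, and the same observation that the extra factor $\Dt_{n+1}$ from the implicit part absorbs the first-order predictor error before applying the discrete Gronwall inequality. No substantive differences from the paper's argument.
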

\begin{proof}
The exact solution $\vPhi(\cdot)$ satisfies
\bry
&\dps\frac{\vPhi(t_{n+1})-\vPhi(t_{n})}{\Dt_{n+1}}+\Lambda(\vPhi(t_{n+\frac{1}{2}}))\Big(-\varepsilon^{2}D_{h}\frac{\vPhi(t_{n+1})+\vPhi(t_{n})}{2}+F'(\vPhi(t_{n+\frac{1}{2}}))\Big)\\
&\quad\dps+\overrightarrow{T}^{n}_{3}+\Lambda(\vPhi(t_{n+\frac{1}{2}}))\overrightarrow{T}^{n}_{4}=0
\ery
for any $1\leq n\leq N-1$, where $\Lambda(\vPhi(t_{n+1/2})):=\mbox{diag}(M(\vPhi(t_{n+1/2}))$ and $\overrightarrow{T}^{n}_{3}$ and $\overrightarrow{T}^{n}_{4}$ are the vector forms of $T^{n}_{3}$ and $T^{n}_{4}$ in \eqref{truerrs}, respectively. Moreover, $\overrightarrow{T}^{n}_{4}$ can be expressed as
\bq\label{eqnn3}
 \overrightarrow{T}^{n}_{4}\dps=-\varepsilon^2\Delta \vPhi(t_{n+\frac{1}{2}})+\varepsilon^2 D_{h}\frac{\vPhi(t_{n+1})+\vPhi(t_{n})}{2}.
\eq
Furthermore, it is easy to verify that
\brr\label{eqnn5}
\|\overrightarrow{T}^{n}_{3}\|_{\infty}&\dps\leq  \frac{1}{24}\Dt_{n+1}^{2}\|\phi\|_{W^{3,\infty}(0,T;L^{\infty}(\Omega))},\\[5pt]
\|\overrightarrow{T}^{n}_{4}\|_{\infty}&\dps\leq  \varepsilon^{2}\Big[\frac{\Dt_{n+1}^{2}}{8}\|\phi\|_{W^{3,\infty}(0,T;W^{2,\infty}(\Omega))}+\frac{h^{2}}{6}\|\phi\|^{2}_{L^{\infty}(0,T;W^{4,\infty}(\Omega))}\Big].
\err
Together with \eqref{CN2_tsor2}, the error equation of $\ve^{n+1}$ reads as
\bry
&\dps\frac{\ve^{n+1}-\ve^{n}}{\Dt_{n+1}}+S_{1}\Big(\frac{\ve^{n+1}+\ve^{n}}{2}-\ve^{n+\frac{1}{2}}\Big)+S_{2}\Dt_{n+1}(\ve^{n+1}-\ve^{n})-\varepsilon^{2}\Lambda^{n+\frac{1}{2}}D_{h}\frac{\ve^{n+1}+\ve^{n}}{2}\\[2pt]
&~=\dps S_{1}\tps(\vPhi(t_{n+\frac{1}{2}})-\frac{\vPhi(t_{n+1})+\vPhi(t_{n})}{2})
-S_{2}\Dt_{n+1}\big(\vPhi(t_{n+1})-\vPhi(t_{n})\big)\\
&~~-\Lambda^{n+\frac{1}{2}}\big[F'(\vPhi^{n+\frac{1}{2}})-F'(\vPhi(t_{n+\frac{1}{2}}))\big]\\[5pt]
&~~\tps-\big[\Lambda^{n+\frac{1}{2}}-\Lambda(\vPhi(t_{n+\frac{1}{2}}))\big]\big[-\varepsilon^{2}D_{h}{\color{black}\frac{\vPhi(t_{n+1})+\vPhi(t_{n})}{2}}+F'(\vPhi(t_{n+\frac{1}{2}}))\big]+\overrightarrow{T}^{n}_{3}+\Lambda(\vPhi(t_{n+\frac{1}{2}}))\overrightarrow{T}^{n}_{4}.
\ery
Let us denote the right-hand side term of the above equality as $R^{n}$.
Then, the above equality can be rewritten as:
\bq\label{eqn1_7}
\dps\frac{\ve^{n+1}}{\Dt_{n+1}}+\frac{S_{1}}{2}\ve^{n+1}+S_{2}\Dt_{n+1}\ve^{n+1}-\frac{\varepsilon^{2}}{2}\Lambda^{n+\frac{1}{2}}D_{h}\ve^{n+1}=\dps Q^{n+1}\ve^{n}+ S_{1}\ve^{n+\frac{1}{2}}+R^{n},
\eq
where $Q^{n+1}$ is defined in \eqref{posi}.
Using the estimate for the matrix $Q^{n+1}$ in \eqref{est_q}, we deduce that
\bq\label{eqnn6}
\|Q^{n+1}\ve^{n}\|_{\infty}\leq\Big(\frac{1}{\Dt_{n+1}}-\frac{S_{1}}{2}+S_{2}\Dt_{n+1}\Big)\|\ve^{n}\|_{\infty}\leq\Big(\frac{1}{\Dt_{n+1}}+\frac{S_{1}}{2}+S_{2}\Dt_{n+1}\Big)\|\ve^{n}\|_{\infty}
\eq
From the definition of $F(\rho)=\frac14(1-\rho^{2})^{2}$, it follows that
$\max_{\rho\in[-1,1]}F'(\rho)=\frac{2}{3\sqrt{3}}$ and  $\max_{\rho\in[-1,1]}F''(\rho)=2.$
Moreover, we derive that
\bq
\|F'(\vPhi^{n+\frac{1}{2}})-F'(\vPhi(t_{n+\frac{1}{2}}))\|_{\infty}\leq 2\|\ve^{n+\frac{1}{2}}\|_{\infty}.
\eq
Furthermore, we can use \eqref{eqnn3} and \eqref{eqnn5} to obtain
\brr
&\tps\|-\varepsilon^{2}D_{h}{\color{black}\frac{\vPhi(t_{n+1})+\vPhi(t_{n})}{2}}+F'(\vPhi(t_{n+\frac{1}{2}}))\|_{\infty}\\
&\quad=\tps\|-\varepsilon^{2}\Delta\vPhi(t_{n+\frac{1}{2}})-\overrightarrow{T}^{n}_{4}+F'(\vPhi(t_{n+\frac{1}{2}}))\|_{\infty}\\
&\quad\leq\tps\varepsilon^{2}\|\phi\|_{L^{\infty}(0,T,W^{2,\infty}(\Omega))}+\|\overrightarrow{T}^{n}_{4}\|_{\infty}+\frac{2}{3\sqrt{3}}\\
&\quad\tps\leq\varepsilon^{2}\|\phi\|_{L^{\infty}(0,T,W^{2,\infty}(\Omega))}+\frac{2}{3\sqrt{3}}\\
&\qquad\tps+\varepsilon^{2}\big(\frac{\Dt^{2}}{8}\|\phi\|_{W^{3,\infty}(0,T;W^{2,\infty}(\Omega))}+\frac{h^{2}}{6}\|\phi\|^{2}_{L^{\infty}(0,T;W^{4,\infty}(\Omega))}\big)\\
&\quad=:C_{3}.
\err
The definitions of $\Lambda^{n+1/2}$ and $\Lambda(\cdot)$ give us
\bq\label{eqnn4}
\|\Lambda^{n+\frac{1}{2}}-\Lambda(\vPhi(t_{n+\frac{1}{2}}))\|_{\infty}\leq\dps\max_{\rho\in[-1,1]}\big|M'(\rho)\big|\| \ve^{n+\frac{1}{2}}\|_{\infty}.
\eq
Multiplying \eqref{eqn1_7} with $\Dt_{n+1}$, and combining it with \eqref{eqnn5} and \eqref{eqnn6}-\eqref{eqnn4}, we deduce from Lemma \ref{lemm2} that
\brr\label{eqn1_12}
\|\ve^{n+1}\|_{\infty}\leq&\tps \Big\|\ve^{n+1}+\frac{S_{1}}{2}\Dt_{n+1}\ve^{n+1}+S_{2}\Dt^{2}_{n+1}\ve^{n+1}-\frac{\varepsilon^{2}\Dt_{n+1}}{2}\Lambda^{n+\frac{1}{2}}D_{h}\ve^{n+1}\Big\|_{\infty}\\[5pt]
=&\;\tps \Dt_{n+1}\| Q^{n+1}\ve^{n}+ S_{1}\ve^{n+\frac{1}{2}}+R^{n}\|_{\infty}\\
\leq&\;\tps\|\ve^{n}\|_{\infty}+\Dt_{n+1}\Big[(\frac{S_{1}}{2}+S_{2}\Dt_{n+1})\|\ve^{n}\|_{\infty}+S_{1}\|\ve^{n+\frac{1}{2}}\|_{\infty}\\
&\tps+\frac{S_{1}\|\phi\|_{W^{2,\infty}(0,T;L^{\infty}(\Omega))}}{8}\Dt_{n+1}^{2}+S_{2}\|\phi\|_{W^{1,\infty}(0,T;L^{\infty}(\Omega))}\Dt^{2}_{n+1}+2L\| \ve^{n+\frac{1}{2}}\|_{\infty}\\
&\tps+C_{3}\max_{\rho\in[-1,1]}\big|M'(\rho)\big|\| \ve^{n+\frac{1}{2}}\|_{\infty}+\|\overrightarrow{T}^{n}_{3}\|_{\infty}+L\|\overrightarrow{T}^{n}_{4}\|_{\infty}\Big]\\
\leq&\;\tps\|\ve^{n}\|_{\infty}+\Dt_{n+1}\big[ C_{4}\|\ve^{n}\|_{\infty}+C_{5}\| \ve^{n+\frac{1}{2}}\|_{\infty}\\[5pt]
&\tps+C_6\Dt^{2}_{n+1}+\frac{\varepsilon^{2}Lh^{2}}{6}\|\phi\|_{L^{\infty}(0,T;W^{4,\infty}(\Omega))}\big],
\err
where $C_{4}=S_{1}/2+S_{2}\Dt$, ${C}_{5}=\dps S_{1}+2L+C_{3}\max_{\rho\in[-1,1]}\big|M'(\rho)\big|$, and
\brr\label{eqn1_9}
{C}_{6}=\;&\dps\frac{S_{1}\|\phi\|_{W^{2,\infty}(0,T;L^{\infty}(\Omega))}}{8}+S_{2}\|\phi\|_{W^{1,\infty}(0,T;L^{\infty}(\Omega))}+ \frac{\|\phi\|_{W^{3,\infty}(0,T;L^{\infty}(\Omega))}}{24}\\
&\dps\;+\frac{\varepsilon^{2}L\|\phi\|_{W^{3,\infty}(0,T;W^{2,\infty}(\Omega))}}{8}.
\err
Following the similar process of deriving \eqref{eqn1_7}, we can easily obtain the error equation of $ \ve^{n+\frac{1}{2}}$ from \eqref{prob} and \eqref{CN2_tsor1}:
\brr\label{eqn1_8}
&\dps\frac{{\color{black}2} \ve^{n+\frac{1}{2}}}{\Dt_{n+1}}+S_{1} \ve^{n+\frac{1}{2}}-\varepsilon^{2}\Lambda^{n}D_{h} \ve^{n+\frac{1}{2}}\\
&\quad=\dps\frac{{\color{black}2}\ve^{n}}{\Dt_{n+1}}+S_{1} \ve^{n}-S_{1}(\vPhi(t_{n+\frac{1}{2}})-\vPhi(t_{n}))-\Lambda^{n}\big[F'(\vPhi^{n})-F'(\vPhi(t_{n+\frac{1}{2}}))\big]\\
&\qquad-\big[\Lambda^{n}-\Lambda(\vPhi(t_{n+\frac{1}{2}}))\big]\big[-\varepsilon^{2}D_{h}\vPhi(t_{n+\frac{1}{2}})+F'(\vPhi(t_{n+\frac{1}{2}}))\big]\\[5pt]
&\qquad\dps+\overrightarrow{T}^{n}_{1}+\Lambda(\vPhi(t_{n+\frac{1}{2}}))\overrightarrow{T}^{n}_{2},
\err
where
$\overrightarrow{T}^{n}_{1}$ and $\overrightarrow{T}^{n}_{2}$ are vector forms of $T^{n}_{1}$ and $T^{n}_{2}$, respectively. Moreover, we have
\begin{equation}\label{eqntt1}
\|\overrightarrow{T}^{n}_{1}\|_{\infty}\leq\dps \frac{\Dt_{n+1}}{4}\|\phi\|_{W^{2,\infty}(0,T;L^{\infty}(\Omega))},\quad
\|\overrightarrow{T}^{n}_{2}\|^{2}_{\infty}\leq\dps \frac{\varepsilon^{2}}{6}h^{2}\|\phi\|_{L^{\infty}(0,T;W^{4,\infty}(\Omega))}.
\end{equation}
Using the triangle inequality we get
\brr\label{eqnn9}
&\|F'(\vPhi^{n})-F'(\vPhi(t_{n+\frac{1}{2}}))\|_{\infty}\\
&\quad=\dps\|F'(\vPhi^{n})-F'(\vPhi(t_{n}))+F'(\vPhi(t_{n}))-F'(\vPhi(t_{n+\frac{1}{2}}))\|_{\infty}\\[5pt]
&\quad\leq\dps 2\| \ve^{n}\|_{\infty}+\|\phi\|_{W^{1,\infty}(0,T;L^{\infty}(\Omega))}\Dt_{n+1},\\[5pt]
&\big\|\Lambda^{n}-\Lambda(\vPhi(t_{n+\frac{1}{2}}))\big\|_{\infty}\\
&\quad\leq\dps\big\|\Lambda^{n}-\Lambda(\vPhi(t_{n}))\big\|_{\infty}+\big\|\Lambda(\vPhi(t_{n}))-\Lambda(\vPhi(t_{n+\frac{1}{2}}))\big\|_{\infty}\\[5pt]
&\quad\leq\dps\max_{\rho\in[-1,1]}\big|M'(\rho)\big|\big[\| \ve^{n}\|_{\infty}+\frac{\Dt_{n+1}}{2}\|\phi\|_{W^{1,\infty}(0,T;L^{\infty}(\Omega))}\big].
\err
Multiplying \eqref{eqn1_8} with $\Dt_{n+1}$, and using Lemma \ref{lemm2}, \eqref{eqntt1}, and \eqref{eqnn9}, we obtain that
\bry
{\color{black}2}\| \ve^{n+\frac{1}{2}}\|_{\infty}\leq\;&\dps\big\| {\color{black}2}\ve^{n+\frac{1}{2}}+S_{1}\Dt_{n+1} \ve^{n+\frac{1}{2}}-\varepsilon^{2}\Dt_{n+1}\Lambda^{n}D_{h} \ve^{n+\frac{1}{2}}\big\|_{\infty}\\[5pt]
\leq\;&\tps {\color{black}2}\|\ve^{n}\|_{\infty}+C_{5}\Dt_{n+1}\| \ve^{n}\|_{\infty}+\frac{S_{1}+2L+C_{3}\max_{\rho\in[-1,1]}\big|M'(\rho)\big|}{2}\|\phi\|_{W^{1,\infty}(0,T;L^{\infty}(\Omega))}\Dt^{2}_{n+1}\\[5pt]
\;&\dps+\frac{\Dt^{2}_{n+1}}{4}\|\phi\|_{W^{2,\infty}(0,T;L^{\infty}(\Omega))}+\Dt_{n+1} \frac{\varepsilon^{2}Lh^{2}}{6}\|\phi\|_{L^{\infty}(0,T;W^{4,\infty}(\Omega))}.
\ery
Therefore, we obtain
\bq\label{eqn1_13}
\| \ve^{n+\frac{1}{2}}\|_{\infty}\leq {C}_{7}\| \ve^{n}\|_{\infty}+C_{8}\Dt^{2}_{n+1}+\Dt_{n+1} \frac{\varepsilon^{2}Lh^{2}}{12}\|\phi\|_{L^{\infty}(0,T;W^{4,\infty}(\Omega))},
\eq
where $
{C}_{7}=1+C_{5}\Dt/2$, and
\beq
C_{8}=\frac{S_{1}+2L+C_{3}\max_{\rho\in[-1,1]}\big|M'(\rho)\big|}{4}\|\phi\|_{W^{1,\infty}(0,T;L^{\infty}(\Omega))}+\frac{1}{8}\|\phi\|_{W^{2,\infty}(0,T;L^{\infty}(\Omega))}.
\eeq
Substituting the estimate \eqref{eqn1_13} for $\|e^{n+1/2}\|$ into \eqref{eqn1_12}, gives
\brr\label{eqnn7}
\|\ve^{n+1}\|_{\infty}
\leq\;&\dps\|\ve^{n}\|_{\infty}+\Dt_{n+1}\big[ (C_{4}+C_{5}C_{7})\|\ve^{n}\|_{\infty}+(C_{5}C_{8}+C_{6})\Dt^{2}_{n+1}\\[3pt]
&\;\dps+(C_{5}\frac{\Dt}{2}+1)\frac{\varepsilon^{2}Lh^{2}}{6}\|\phi\|_{L^{\infty}(0,T;W^{4,\infty}(\Omega))}\big]\\[5pt]
\leq\;&\dps\|\ve^{n}\|_{\infty}+\Dt_{n+1}\big[ C_{2}\|\ve^{n}\|_{\infty}+C_{1}(\Dt^{2}_{n+1}+h^{2})\big],
\err
where $C_{1}=\max\big\{C_{5}C_{8}+C_{6},~(C_{5}\Dt/2+1)\varepsilon^{2}L\|\phi\|_{L^{\infty}(0,T;W^{4,\infty}(\Omega))}/6\big\}$ and $C_{2}=C_{4}+C_{5}C_{7}$.
Summing up \eqref{eqnn7} from 0 to n, and together with the discrete the Gronwall's Lemma, the desired estimate \eqref{eqnn8} can be derived, which completes the proof.
 \end{proof}
\begin{theorem}\label{st_the}
Under the assumption of Theorem \ref{th3} and the additional condition $S_{2}\geq L^{2}/8$, the CN scheme \eqref{CN_2}  in the general mobility case satisfies
\bq\label{eq2_11}
\dps E_{h}(\Phi^{n+1})-E_{h}(\Phi^{n})\leq C\Dt_{n+1}(h^{4}+\Dt^{2}),
\eq
and consequently,
\bq\label{equn21}
\dps E_{h}(\Phi^{n+1})\leq E_{h}(\Phi^{0})+C T(h^{4}+\Dt^{2}), \quad 0\leq n\leq N-1,
\eq
for all $0\leq n\leq N-1$,  i.e., the discrete free energy is uniformly  bounded by the energy at the initial time plus a constant.
\end{theorem}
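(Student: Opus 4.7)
The plan is to parallel the constant-mobility argument carried out in equations \eqref{eqt4}--\eqref{equn1_1}, but with one critical preprocessing step: before taking any inner product, I would divide the second line \eqref{eqn4} of the CN scheme \emph{pointwise} by $M(\Phi^{n+\frac{1}{2}})$, which is legitimate since $M\geq M_0>0$. This division removes the mobility factor from in front of the discrete Laplacian and the $F'$ term, so that taking the discrete $L^2$ inner product of the resulting identity with $V:=\Phi^{n+1}-\Phi^n$ and invoking Lemma \ref{intpat} produces the clean telescoping gradient-energy term $\frac{\varepsilon^2}{2}\bigl(\|\nabla_h\Phi^{n+1}\|_h^2-\|\nabla_h\Phi^n\|_h^2\bigr)$. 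The price paid is that the time-derivative and the $S_2$ stabilization now contribute the $M^{-1}$-weighted quantity $\|V\|_{M^{-1}}^2:=\langle V^2/M(\Phi^{n+\frac{1}{2}}),1\rangle_\Omega$, and the $S_1$ coupling acquires a $1/M$ factor.

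Next, mirroring the constant-mobility template, I would apply the convexity identity \eqref{equnt1}--\eqref{eqnt2} (which needs only the discrete MBP, guaranteed in the general-mobility case by Theorem \ref{thmMBP}) together with the splitting $\langle F'(\Phi^{n+\frac{1}{2}}),V\rangle_\Omega=\langle F'(\Phi^{n+1}),V\rangle_\Omega-\langle F'(\Phi^{n+1})-F'(\Phi^{n+\frac{1}{2}}),V\rangle_\Omega$ to recover $E_h(\Phi^{n+1})-E_h(\Phi^n)$ plus two perturbation inner products. The perturbations $\langle F'(\Phi^{n+1})-F'(\Phi^{n+\frac{1}{2}}),V\rangle_\Omega$ and $S_1\langle M^{-1}(\tfrac{\Phi^{n+1}+\Phi^n}{2}-\Phi^{n+\frac{1}{2}}),V\rangle_\Omega$ would then be bounded by Cauchy--Schwarz (using $|F''|\leq 2$ on $[-1,1]$ and $1/M\leq 1/M_0$) followed by Young's inequality with the tailored parameter $\alpha=L\Dt_{n+1}$, each contributing $\|V\|_h^2/(4L\Dt_{n+1})$ to the bad side.

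The critical algebraic balance is then between $\|V\|_h^2$ coefficients. Using $\|V\|_{M^{-1}}^2\geq \|V\|_h^2/L$, the beneficial terms yield $-\bigl(1/(L\Dt_{n+1})+S_2\Dt_{n+1}/L\bigr)\|V\|_h^2$, while the two Young-perturbations plus the $\frac{1}{2}\|V\|_h^2$ from the convexity identity produce $\bigl(1/(2L\Dt_{n+1})+1/2\bigr)\|V\|_h^2$. The net coefficient is therefore $-1/(2L\Dt_{n+1})+1/2-S_2\Dt_{n+1}/L$, and the AM--GM inequality $1/(2L\Dt_{n+1})+S_2\Dt_{n+1}/L\geq \sqrt{2S_2}/L$ makes it nonpositive precisely when $\sqrt{2S_2}\geq L/2$, i.e., $S_2\geq L^2/8$; this is exactly the extra hypothesis in the statement. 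The residual positive quantities take the form $C\Dt_{n+1}\bigl(\|\Phi^{n+1}-\Phi^{n+\frac{1}{2}}\|_h^2+\|\tfrac{\Phi^{n+1}+\Phi^n}{2}-\Phi^{n+\frac{1}{2}}\|_h^2\bigr)$, and triangle inequalities identical in form to \eqref{eqnt4} bound them by $C\Dt_{n+1}\bigl(\|e^{n+1}\|_h^2+\|e^n\|_h^2+\|e^{n+\frac{1}{2}}\|_h^2+\Dt_{n+1}^2\bigr)$.

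Finally, inserting the $L^\infty$ error estimate of Theorem \ref{th3} via $\|e^k\|_h^2\leq|\Omega|\|\ve^k\|_\infty^2\leq C(\Dt^4+h^4)$ delivers the per-step bound \eqref{eq2_11}, and telescoping the inequality from $0$ to $n$ (using $\sum \Dt_{k+1}=T$) gives the global bound \eqref{equn21}. The main technical obstacle is the careful calibration of Young's parameter as $\alpha=L\Dt_{n+1}$ in tandem with the AM--GM application, which is precisely what produces the sharp threshold $S_2\geq L^2/8$; once this balance is struck, the remainder follows the constant-mobility pattern, now executed in the $M^{-1}$-weighted framework.
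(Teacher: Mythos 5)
Your proposal follows essentially the same route as the paper's own proof: dividing the scheme pointwise by $M(\Phi^{n+\frac{1}{2}})$ is exactly the paper's multiplication by $(\Lambda^{n+\frac{1}{2}})^{-1}$, and the subsequent steps — the convexity identity \eqref{equnt1}, Cauchy--Schwarz/Young with the parameter $L\Dt_{n+1}$ producing two $|\vPhi^{n+1}-\vPhi^{n}|^{2}/(4\Dt_{n+1}L)$ contributions, the lower bound $1/M\leq 1/L$ on the weighted quadratic form, the AM--GM step yielding the threshold $S_{2}\geq L^{2}/8$, the triangle-inequality treatment of the residuals, and the insertion of the $L^{\infty}$ error bounds before telescoping — all coincide with the paper's argument. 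The proof is correct as outlined.
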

\begin{proof}
From $0<M_{0}\leq M(\rho)\leq L$ for $\rho\in[-1,1]$ and the definition of $\Lambda^{n+1/2}$ in \eqref{CN2_tsor2}, it follows that the matrix $\Lambda^{n+1/2}$ is invertible.
Moreover, we get
\bq\label{equnt4}
\tps(\Lambda^{n+1/2})^{-1}=\mbox{diag}\Big(\frac{1}{M(\vPhi^{n+1/2}_{1})},\frac{1}{M(\vPhi^{n+1/2}_{2})},\cdots,\frac{1}{M(\vPhi^{n+1/2}_{M^{2}})}\Big),
\eq
and
$1/L\leq1/M(\vPhi^{n+1/2}_{i})\leq 1/M_{0}$ for $i=1,2,\cdots, M^{2}$.
Therefore, we multiply both sides of \eqref{CN2_tsor2} with $(\Lambda^{n+1/2})^{-1}$ to obtain that
\bry
&\dps\big[\frac{1}{\Dt_{n+1}}+S_{2}\Dt_{n+1}\big](\Lambda^{n+\frac{1}{2}})^{-1}(\vPhi^{n+1}-\vPhi^{n})-\varepsilon^{2}D_{h}\frac{\vPhi^{n+1}+\vPhi^{n}}{2}+F'(\vPhi^{n+\frac{1}{2}})\nonumber\\
&\qquad\qquad\qquad\qquad\qquad\qquad\qquad\qquad\qquad\dps+S_{1}(\Lambda^{n+\frac{1}{2}})^{-1}\Big(\frac{\vPhi^{n+1}+\vPhi^{n}}{2}-\vPhi^{n+\frac{1}{2}}\Big)=0,
\ery
Furthermore, multiplying the above equality with $(\vPhi^{n+1}-\vPhi^{n})^{T}$, gives
\brr\label{equnt2}
&\dps -\frac{\varepsilon^{2}}{2}(\vPhi^{n+1}-\vPhi^{n})^{T}D_{h}(\vPhi^{n+1}+\vPhi^{n})+(\vPhi^{n+1}-\vPhi^{n})^{T}F'(\vPhi^{n+1})\\[5pt]
&\quad\quad=\dps (\vPhi^{n+1}-\vPhi^{n})^{T}(F'(\vPhi^{n+1})-F'(\vPhi^{n+\frac{1}{2}}))\\
&\quad\qquad\dps-\big[\frac{1}{\Dt_{n+1}}+S_{2}\Dt_{n+1}\big](\vPhi^{n+1}-\vPhi^{n})^{T}(\Lambda^{n+\frac{1}{2}})^{-1}(\vPhi^{n+1}-\vPhi^{n})\\
&\quad\qquad\dps-S_{1}(\vPhi^{n+1}-\vPhi^{n})^{T}(\Lambda^{n+\frac{1}{2}})^{-1}\big(\frac{\vPhi^{n+1}+\vPhi^{n}}{2}-\vPhi^{n+\frac{1}{2}}\big).
\err
Due to \eqref{equnt1}, we have
 \bq\label{equnt3}
 \sum_{i=1}^{M^{2}}(F(\vPhi^{n+1}_{i})-F(\vPhi^{n}_{i}))\leq\dps (\vPhi^{n+1}-\vPhi^{n})^{T}F'(\vPhi^{n+1})+\frac{1}{2}|\vPhi^{n+1}-\vPhi^{n}|^{2}.
 \eq
Using the fact $\max_{-1\leq\rho\leq1} F''(\rho)\leq 2$, the Cauchy-Schwarz inequality, and  the Young's inequality  together with \eqref{equnt4}, we deduce that
 \brr\label{equnt5}
&\dps(\vPhi^{n+1}-\vPhi^{n})^{T}(F'(\vPhi^{n+1})-F'(\vPhi^{n+\frac{1}{2}}))\\
&\quad\dps\leq 4\Dt_{n+1}L|\vPhi^{n+1}-\vPhi^{n+\frac{1}{2}}|^{2}+\frac{|\vPhi^{n+1}-\vPhi^{n}|^{2}}{4\Dt_{n+1}L},\\[5pt]
&\dps-\big[\frac{1}{\Dt_{n+1}}+S_{2}\Dt_{n+1}\big](\vPhi^{n+1}-\vPhi^{n})^{T}(\Lambda^{n+\frac{1}{2}})^{-1}(\vPhi^{n+1}-\vPhi^{n})\\
&\quad\dps\leq -\big[\frac{1}{\Dt_{n+1}}+S_{2}\Dt_{n+1}\big]\frac{|\vPhi^{n+1}-\vPhi^{n}|^{2}}{L},\\[5pt]
&\dps-S_{1}(\vPhi^{n+1}-\vPhi^{n})^{T}(\Lambda^{n+\frac{1}{2}})^{-1}\big(\frac{\vPhi^{n+1}+\vPhi^{n}}{2}-\vPhi^{n+\frac{1}{2}}\big)\\
&\quad\dps\leq\frac{S_{1}^{2}L\Dt_{n+1}}{M^{2}_{0}}\Big|\frac{\vPhi^{n+1}+\vPhi^{n}}{2}-\vPhi^{n+\frac{1}{2}}\Big|^{2}+\frac{|\vPhi^{n+1}-\vPhi^{n}|^{2}}{4\Dt_{n+1}L}.
\err
Combining \eqref{equnt2} with \eqref{equnt3} and \eqref{equnt5}, together with the definition of $E_{h}(\Phi^{n})$ in \eqref{dis_eg}, we obtain
\brr\label{equnt6}
 &E_{h}(\Phi^{n+1})-E_{h}(\Phi^{n})\\
 &\quad\tps=h^{2}\Big[-\frac{\varepsilon^{2}}{2}\big[(\vPhi^{n+1})^{T}D_{h}\vPhi^{n+1}-(\vPhi^{n})^{T}D_{h}\vPhi^{n}\big]+\sum_{i=1}^{M^{2}}(F(\vPhi^{n+1}_{i})-F(\vPhi^{n}_{i}))\Big]\\[5pt]
  &\quad\tps\leq h^{2}\big[-\frac{\varepsilon^{2}}{2}(\vPhi^{n+1}-\vPhi^{n})^{T}D_{h}(\vPhi^{n+1}+\vPhi^{n})\\[5pt]
  &\qquad+(\vPhi^{n+1}-\vPhi^{n})^{T}F'(\vPhi^{n+1})+\frac{1}{2}|\vPhi^{n+1}-\vPhi^{n}|^{2}\big]\\[5pt]
&\quad\leq\tps h^{2}\big[ 4\Dt_{n+1}L|\vPhi^{n+1}-\vPhi^{n+\frac{1}{2}}|^{2}+\frac{S^{2}_{1}L\Dt_{n+1}}{M^{2}_{0}}\Big|\frac{\vPhi^{n+1}+\vPhi^{n}}{2}-\vPhi^{n+\frac{1}{2}}\Big|^{2}\\[5pt]
 &\qquad\tps-\big(\frac{1}{2\Dt_{n+1}}+S_{2}\Dt_{n+1}-\frac{L}{2}\big)\frac{|\vPhi^{n+1}-\vPhi^{n}|^{2}}{L}\big].
\err
 Similar to derive \eqref{eqnt4}, we can use the triangle inequality to obtain that
 \brr\label{equnt7}
 &h^{2}|\vPhi^{n+1}-\vPhi^{n+\frac{1}{2}}|^{2}\\
 &\quad\leq\dps h^{2}\big[|\ve^{n+1}|^{2}+|\ve^{n+\frac{1}{2}}|^{2}\big]+\frac{\Dt^{2}_{n+1}|\Omega|}{4}\|\phi_{t}\|_{L^{\infty}(0,T;L^{\infty}(\Omega))},\\[5pt]
 &\quad\leq\dps|\Omega|\big[\|\ve^{n+1}\|^{2}_{\infty}+\|\ve^{n+\frac{1}{2}}\|^{2}_{\infty}\big]+\frac{\Dt^{2}_{n+1}|\Omega|}{4}\|\phi_{t}\|_{L^{\infty}(0,T;L^{\infty}(\Omega))},\\[5pt]
&\dps h^{2}\Big|\frac{\vPhi^{n+1}+\vPhi^{n}}{2}-\vPhi^{n+\frac{1}{2}}\Big|^{2}\\
&\quad\leq\dps h^{2}\big[|\ve^{n+1}|^{2}+|\ve^{n}|^{2}+|\ve^{n+\frac{1}{2}}|^{2}\big]+\frac{\Dt^{2}_{n+1}|\Omega|}{4}\|\phi_{t}\|_{L^{\infty}(0,T;L^{\infty}(\Omega))}\\[5pt]
 &\quad\leq\dps|\Omega|\big[\|\ve^{n+1}\|^{2}_{\infty}+\|\ve^{n}\|^{2}_{\infty}+\|\ve^{n+\frac{1}{2}}\|^{2}_{\infty}\big]+\frac{\Dt^{2}_{n+1}|\Omega|}{4}\|\phi_{t}\|_{L^{\infty}(0,T;L^{\infty}(\Omega))}.
 \err
Thus, we deduce from \eqref{equnt6}, \eqref{equnt7}, \eqref{eqnn8} and \eqref{eqn1_13} that
\bry
E_{h}(\Phi^{n+1})-E_{h}(\Phi^{n})&\leq\tps C_{9}\Dt_{n+1}\big(h^{4}+\Dt^{2}_{n+1}\big)-\big(\frac{1}{2\Dt_{n+1}}+S_{2}\Dt_{n+1}-\frac{L}{2}\big)\frac{|\vPhi^{n+1}-\vPhi^{n}|^{2}}{L}\\
&\leq\dps C_{9}\Dt_{n+1}\big(h^{4}+\Dt^{2}_{n+1}\big),
 \ery
where we have used
\beq
\frac{1}{2\Dt_{n+1}}+S_{2}\Dt_{n+1}-\frac{{\color{black}L}}{2}\geq 2\sqrt{\frac{S_{2}}{2}}-\frac{L}{2}\geq0.
\eeq
for any $S_2\geq L^{2}/8.$
 Summing the above inequality up from 0 to $n$ gives the desired result \eqref{equn21}.
\end{proof}
\section{Numerical experiments}
\setcounter{equation}{0}

In this section, some numerical experiments are presented to validate the theoretical results of the proposed CN scheme \eqref{CN_2} in terms of accuracy and preservation of the MBP.
 Throughout the numerical tests, the models are subject to the homogenous Neumann boundary condition, and the central finite difference method is exploited for the spatial discretization.

\subsection{Temporal convergence}
We consider the Allen-Cahn equation \eqref{prob} with the parameter $\varepsilon=0.01$, the initial value
$$\phi_0(x,y)=0.1(\cos3x\cos2y+\cos5x\cos5y),$$
and two types of mobility functions: one is the constant mobility  $M(\phi)\equiv1$ and the other is the nonlinear degenerate mobility $M(\phi)=1-\phi^{2}$.
The computational domain is set to be $\Omega = (0,1)^2$ and the terminal time is chosen to be $T=1$.
We  fix the uniform spatial mesh size $h=1/1024$ which is small enough so that the spacial discretization error is negligible compared to that by the temporal discretization.
The stabilizing parameters are chosen to be $S_{1}=S_{2}=2$.
Due to no analytical solution available for this numerical experiment, we evaluate the numerical solution errors in the discrete $L^{\infty}$ and $H^1$ norms, respectively, as follows:
\beq
e^T_{\infty}(N)=\|\Phi^{N}-\Phi^{2N}\|_{\infty}, \quad e^T_{H^1}(N)=\|\Phi^{N}-\Phi^{2N}\|_{H^1_h},
\eeq
where $N$ denotes the number of subintervals for the time domain $[0,1]$ and $\Phi^{N}$ is the corresponding numerical solution at the terminal time $T=1$.

Firstly, we test the convergence rate of the CN scheme \eqref{CN_2} on the uniform temporal meshes with time step size $\tau$ ranging
from $1/10$ to $1/320$ (i.e., $N$ changes from $10$ to $320$).
In Fig. \ref{fig1}, we present the $H^{1}$ and $L^{\infty}$ errors at $T=1$ as functions of the time step sizes in log-log scale.
It is shown that the CN scheme \eqref{CN_2} achieves the expected second-order temporal accuracy for both mobility cases.
Next, we numerically investigate the error behaviors of the CN scheme \eqref{CN_2} with a sequence of nonuniform temporal meshes, which is produced  by $40\%$ perturbation of the uniform ones $\{t_{n}=n/N\}_{n=0}^{N}$.
We denote by $\big\{\gamma_{n}=\Dt_{n}/\Dt_{n-1}\big\}_{n=2}^{N}$ the adjacent time-step ratios.
Once again the observed error behaviors in Table \ref{table1} achieve the desired second order accuracy in time for all cases.

\begin{figure*}[!htbp]
\centerline{\includegraphics[scale=0.5]{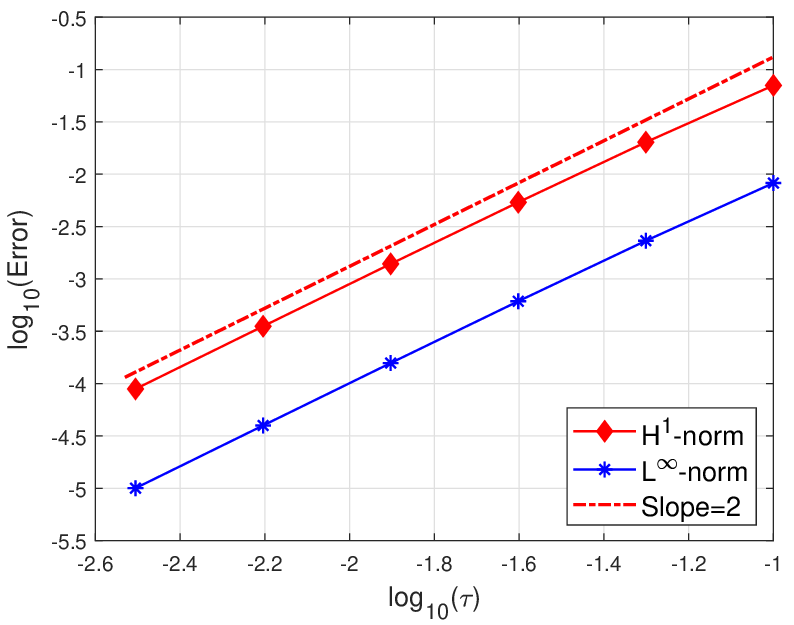}\includegraphics[scale=0.5]{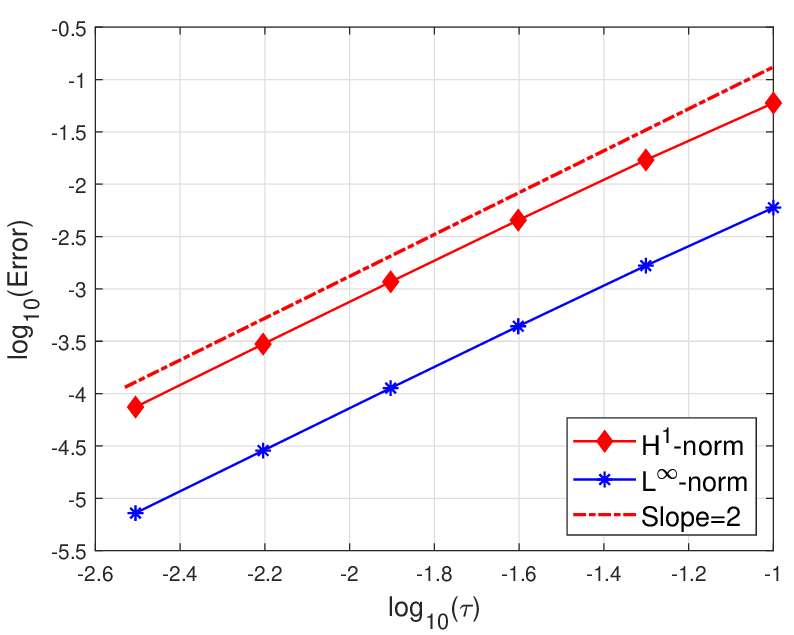}}
\caption{The error behavior with respect to the time step size for the CN scheme \eqref{CN_2} with uniform time steps.
Left: $M(\phi)\equiv1$; right: $M(\phi)=1-\phi^{2}$.
}\label{fig1}
\end{figure*}

\begin{table}[!htbp]\small
  \begin{center}
      \caption{Numerical solution errors and convergence rates of the CN scheme \eqref{CN_2} with nonuniform time steps.}\label{table1}
   \scalebox{0.86}{ \begin{tabular}{|ccc|cc|cc|cc|cc|} \hline
    \multicolumn{3}{|c|}{Time steps}&\multicolumn{4}{|c|}{$M(\phi)= 1$ }&\multicolumn{4}{|c|}{$M(\phi)=1-\phi^{2}$ } \\ \hline
     $N$&$\tau$&$\max\{\gamma_{n}\}$&$e^T_{H^1_h}$&Order&$e^T_{\infty}$&Order&$e^T_{H^{1}_{h}}$&Order&$e^T_{\infty}$&Order\\ \hline
     $10$ &1.629e-1&4.322&9.236e-2&--   &1.090e-2&--    &7.823e-2&--   &7.927e-3&--   \\
     $20$ &8.254e-2&4.660&2.768e-2&1.77&3.198e-3&1.80 &2.334e-2&1.78&2.310e-3&1.81 \\
     $40$ &3.953e-2&4.950&6.721e-3&1.92&7.659e-4&1.94 &5.657e-3&1.93&5.511e-4&1.95 \\
    $80$  &2.105e-2&6.901&1.933e-3&1.98&2.184e-4&1.99 &1.625e-3&1.98&1.569e-4&1.99 \\
    $160$ &1.075e-2&6.997&4.562e-4&2.15&5.141e-5&2.15 &3.838e-4&2.15&3.689e-5&2.15 \\
    $320$ &5.546e-3&8.084&1.119e-4&2.12&1.261e-5&2.12 &9.517e-5&2.11&8.980e-6&2.14 \\
      \hline
    \end{tabular}}
  \end{center}
\end{table}

\subsection{MBP preservation}

To test the MBP preservation of the proposed CN scheme \eqref{CN_2}, we consider two well-known benchmark problems governed by the
Allen-Cahn equations. One is the grain coarsening dynamic process  and the other is the shrinking bubble problem \cite{Chen98}.

\paragraph*{\bf The grain coarsening dynamics}

In this numerical experiment, we investigate the coarsening dynamics governed by the Allen-Cahn equation \eqref{prob} with a nonlinear degenerate mobility $M(\phi)=1-\phi^{2}$ and a random initial data ranging from $-0.1$ to $0.1$.
The domain is set to be $\Omega=(0,1)^{2}$ with the width parameter $\varepsilon=1/256\approx3.91e$-$3$, and the uniform spatial mesh with $h=1/256$ is used for spatial discretization.
In particular, for such a nonlinear mobility function, it is of essential importance to preserve the numerical {\color{black}solution $\phi\in[-1,1]$} in the numerical algorithm. Otherwise, it may lead to the ill-posedness of the proposed numerical scheme, and the numerical solution blowing up during the time simulation, as shown in Fig. \ref{fig2} (a).

The MBP preservation of the proposed CN scheme  \eqref{CN_2} is investigated for this example  through a long time simulation up to $T=3000$ using the CN scheme \eqref{CN_2} with several large  time step sizes.
We set $S_{1}=4/5$ satisfying the condition \eqref{eqn1_3}, and  choose two  values for the stabilizing parameter $S_{2}$:
 one is the case of $S_{0}=0$ leading to the conditional MBP preservation of the CN scheme,  the other is $S_{2}=\big(\frac{S_{1}}{4}+\frac{L\varepsilon^{2}}{h^{2}}\big)^{2}$ satisfying \eqref{res_s} to guarantee the unconditional MBP preservation of the CN scheme  \eqref{CN_2} as stated in Theorem \ref{thmMBP}.
 The evolutions of the supremum norm and energy of the simulated solutions for both cases are presented in Fig. \ref{fig2}.
 We observe that the CN scheme \eqref{CN_2} with $S_{2}=0$ and $\Dt=2$ preserve the MBP, and the numerical solutions blow up at a finite time about $t=30$, see \ref{fig2}-(a).
It is also seen in Fig. \ref{fig2}-(b) that the CN scheme preserve the MBP under all tested time step sizes when the two stabilizing parameters $S_{1}$ and $S_{2}$ satisfy the conditions \eqref{eqn1_3} and \eqref{res_s}, respectively; furthermore, it also achieves energy dissipation for all cases in the sense of $E_{h}(\phi^{n+1})\leq E_{h}(\phi^{n})$ for $n=0,\cdots,N-1$.

One of main advantages of the unconditionally stable schemes is that it can be easily adopted by an adaptive time strategy.
This is particularly useful for the long time simulation of the coarsening dynamic process, in which the phase transition usually goes through several different stages within a long period:  changes quickly at the beginning and then rather slowly until it reaches a steady state.
{\color{black} There already exist several efficient time-adaptive strategies \cite{HQ23,HQ22,LTZ20,Shen17_2,QZT11,STY16} that can be used in conjunction with numerical schemes with variable time steps.}
We will exploit the use of the following robust time adaptive strategy with the CN scheme \eqref{CN_2} in the simulation, which is based on the energy
variation  \cite{QZT11} to efficiently  simulate the coarsening dynamic process:
\bq\label{adp}
\dps\Dt_{n+1}=\max\big(\Dt_{min},\frac{\Dt_{max}}{\sqrt{1+\alpha |E^{'}(t)|^{2}}}\big),
\eq
where $\Dt_{min},\Dt_{max}$ are the predetermined minimum and maximum time step sizes, and $\alpha$ is a positive constant parameter.
Obviously according to this type of time strategy, the numerical scheme will automatically select large time step sizes when energy variation is big and a small ones otherwise.
 The parameters are set to be $\tau_{min}=10^{-5}$, $\tau_{max}=0.1$, and $\alpha=10^{5}$ in this test.
In the simulation, we choose $S_{1}=4/5$ and $S_{2}=\big(\frac{S_{1}}{4}+\frac{L\varepsilon^{2}}{h^{2}}\big)^{2}$ such that both of the conditions \eqref{eqn1_3} and \eqref{res_s} are satisfied.
As shown in Fig. \ref{fig2_2}-(a), the CN scheme always preserves the MBP property during the simulation up to $T=150000.$
Moreover, we also display several snapshots of the simulated phase structures (around the times $t = 500, 1000, 5000, 10000, 13000, 15000$) in Fig. \ref{fig2_1},
and the evolutions of the energy and the adaptive time step sizes in Fig. \ref{fig2_2}-(b)\&(c).
As shown in Fig. \ref{fig2_2}-(b)\&(c), the computed energy is always dissipative in time and the adaptive time stepping strategy scheme automatically select the time step sizes according the changing rate of the free energy, which demonstrates the efficiency of our proposed scheme adopted with the time adaptive strategy \eqref{adp}.

\begin{figure*}[!htbp]
\centerline{\includegraphics[scale=0.38]{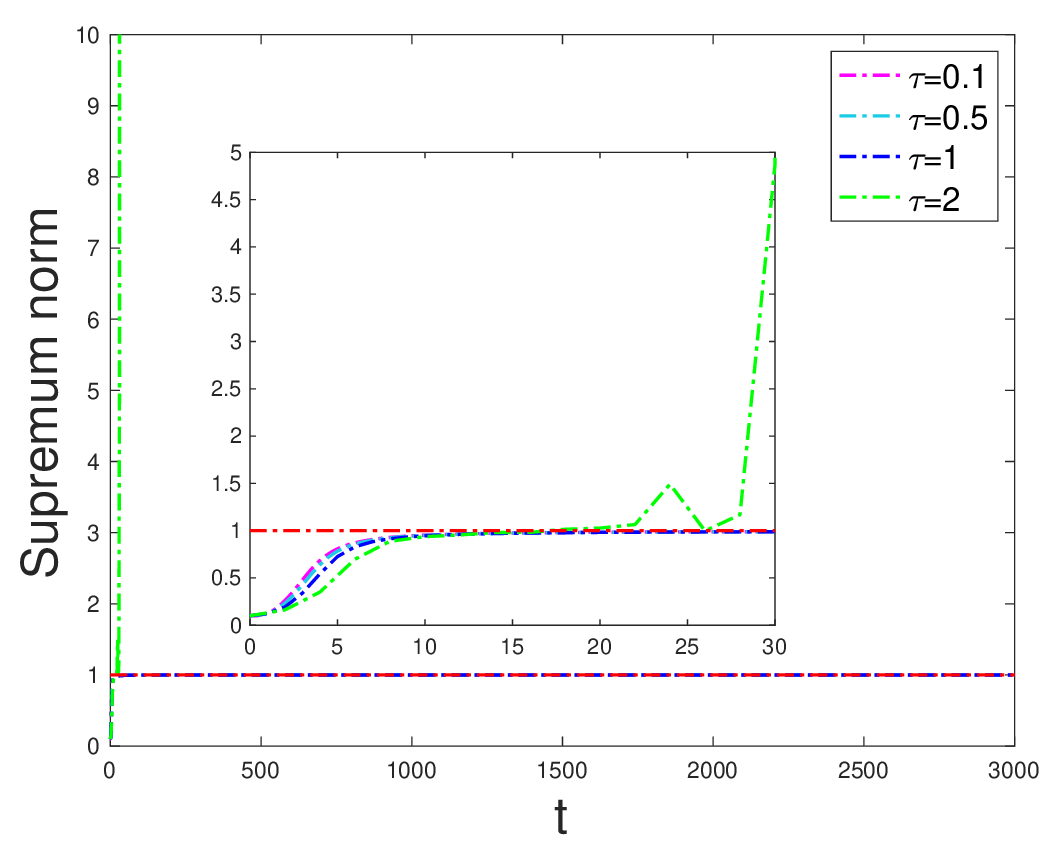}\includegraphics[scale=0.38]{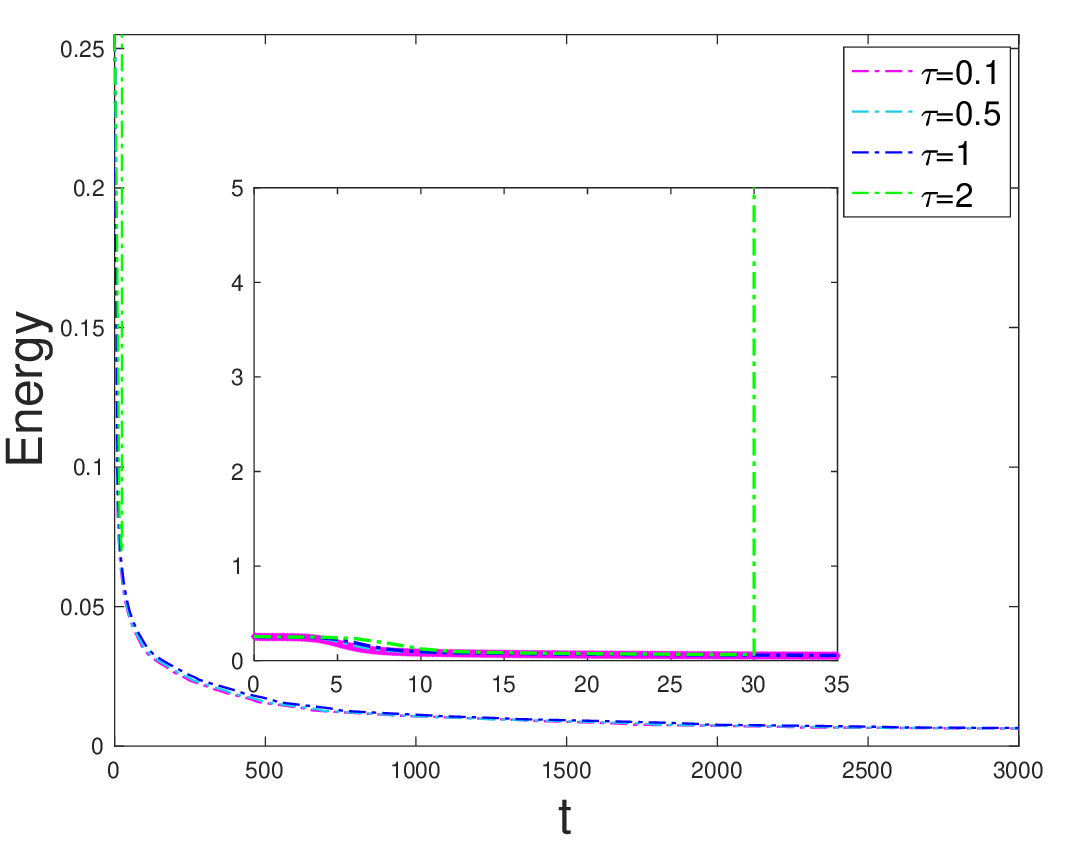}}
\centerline{(a) $S_{1}=\frac{4}{5}$ and $S_{2}=0$  }
\centerline{\includegraphics[scale=0.38]{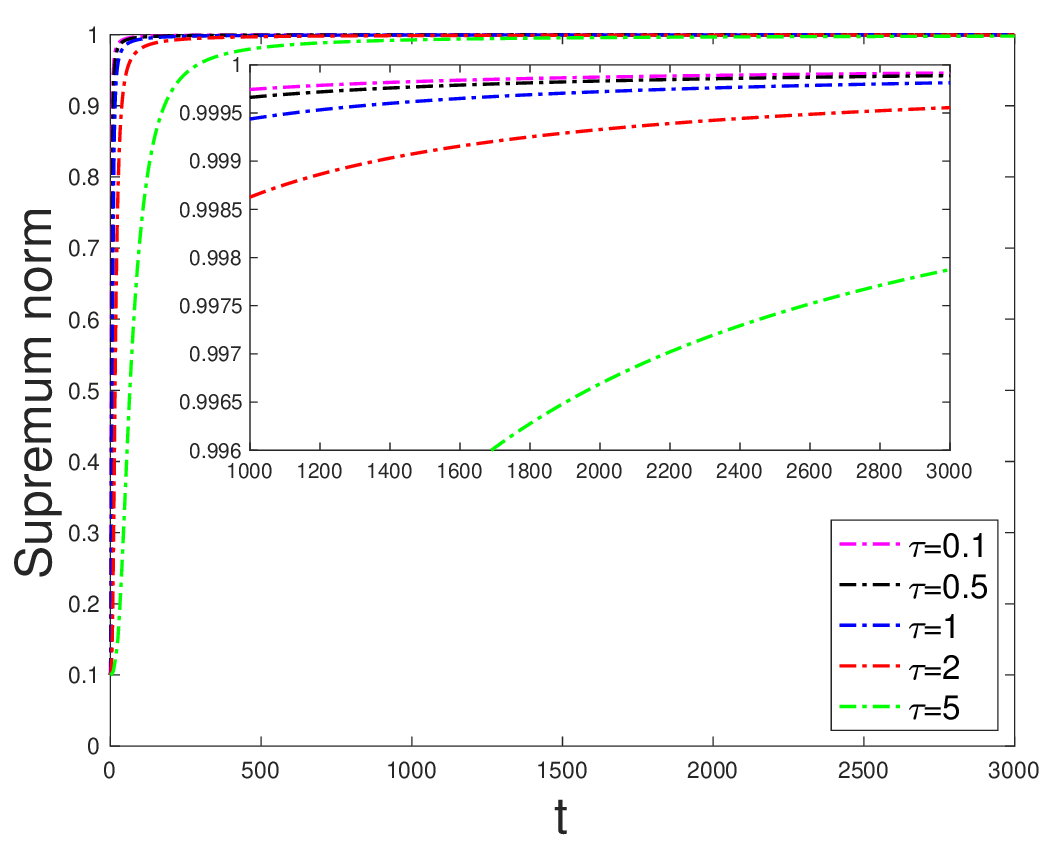}\includegraphics[scale=0.38]{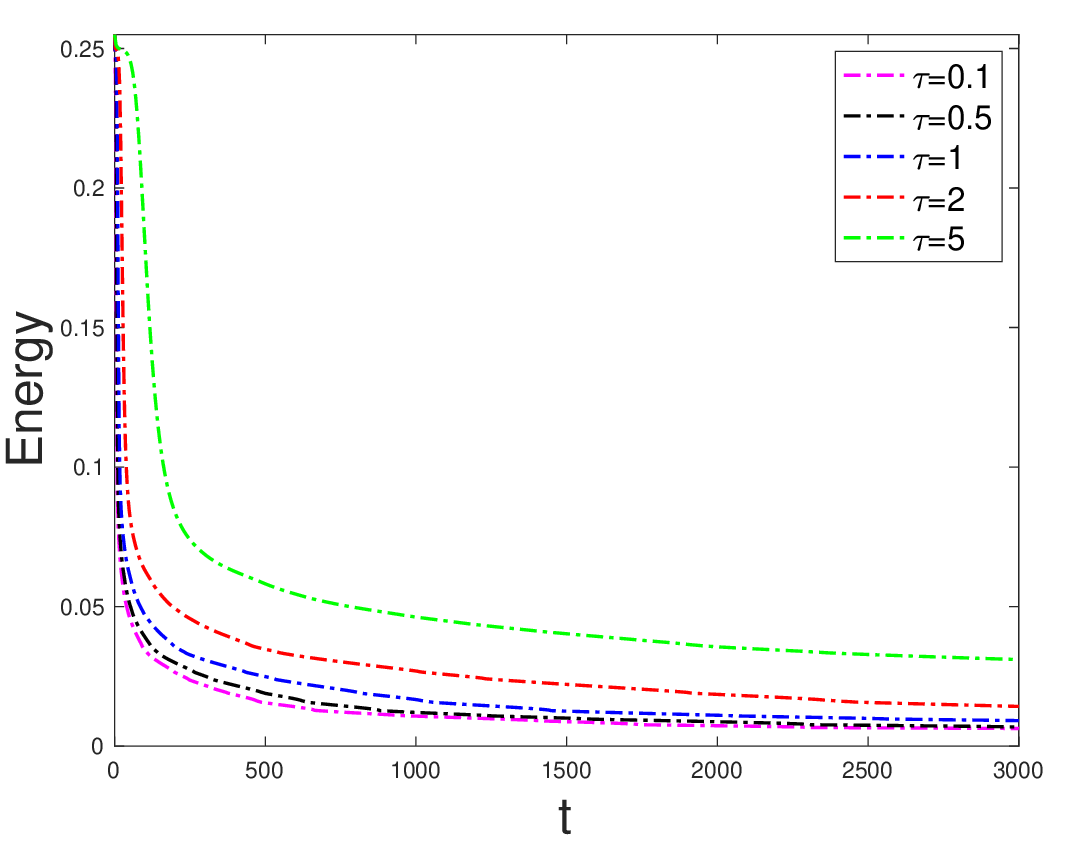}}
\centerline{(b) $S_{1}=\frac{4}{5}$ and $S_{2}=\big(\frac{S_{1}}{4}+\frac{L\varepsilon^{2}}{h^{2}}\big)^{2}$ }
\caption{The evolutions in time of the supremum norm (left) and the energy (right)  of the simulated solution produced by the CN scheme \eqref{CN_2} with some uniform time steps for the grain coarsening  problem with the  degenerate mobility.}\label{fig2}
\end{figure*}

\begin{figure*}[!htbp]
\centerline{\includegraphics[scale=0.3]{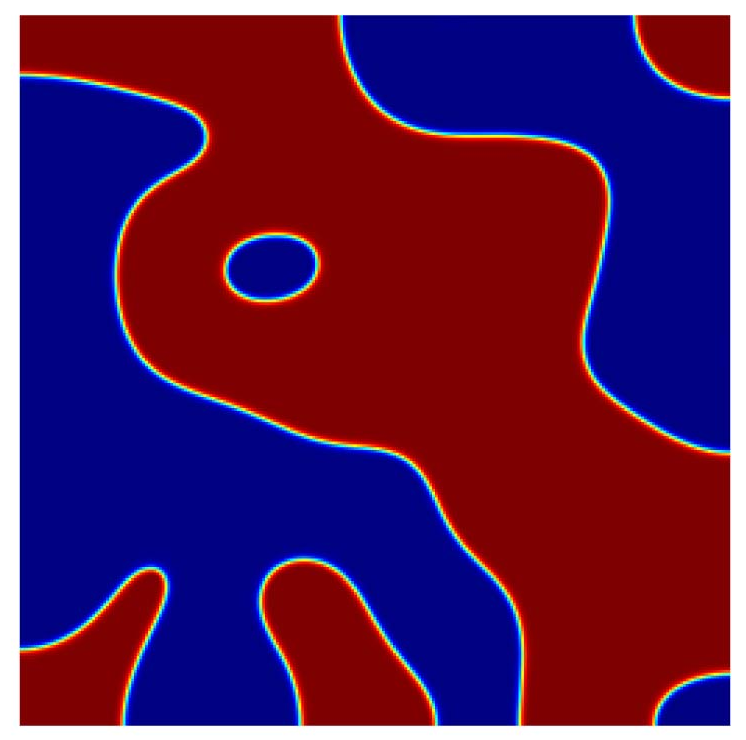}\includegraphics[scale=0.3]{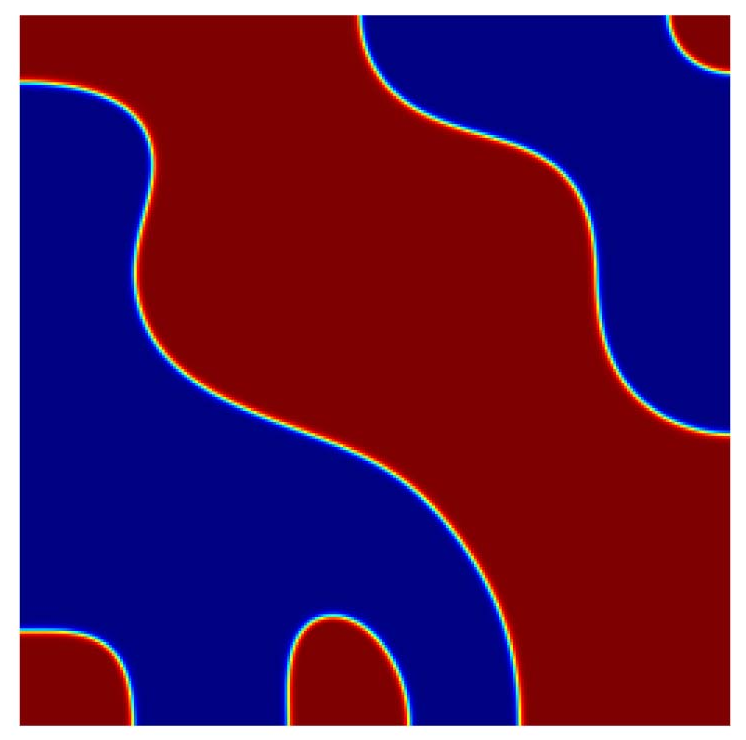}\includegraphics[scale=0.3]{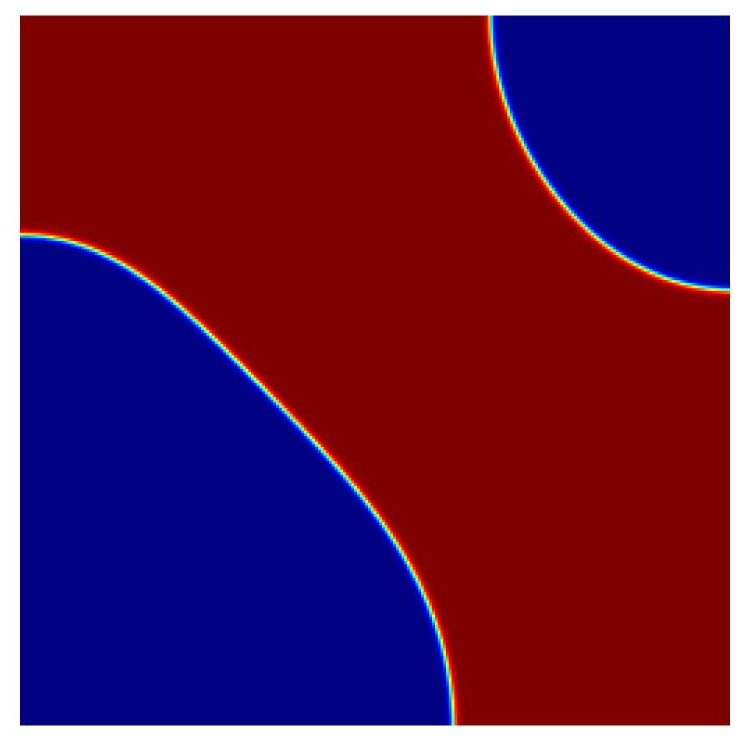}}
\centerline{\includegraphics[scale=0.3]{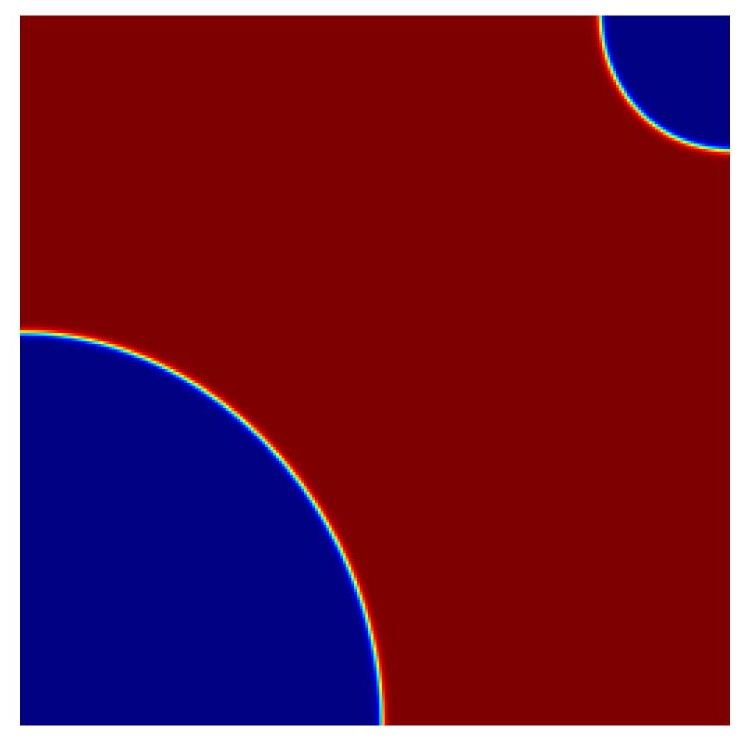}\includegraphics[scale=0.3]{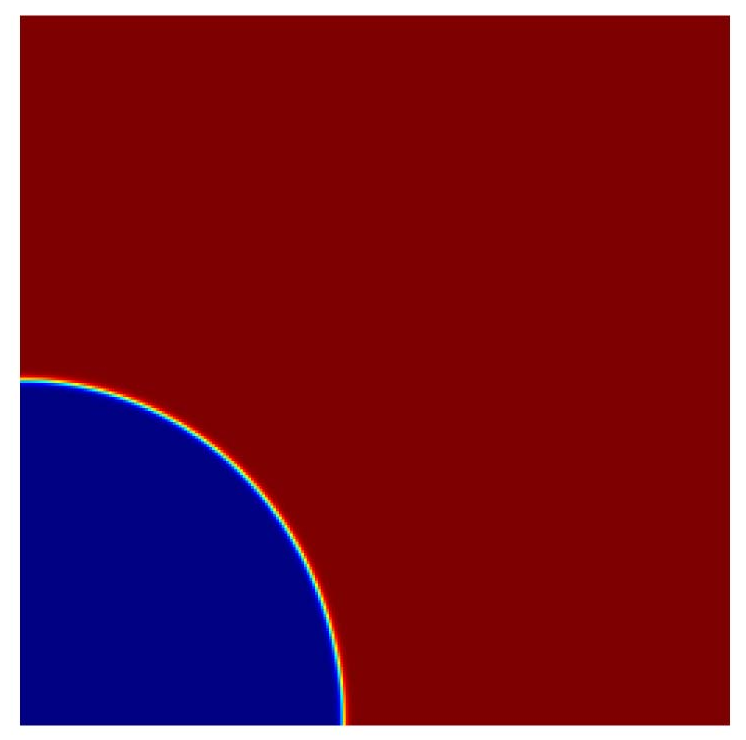}\includegraphics[scale=0.3]{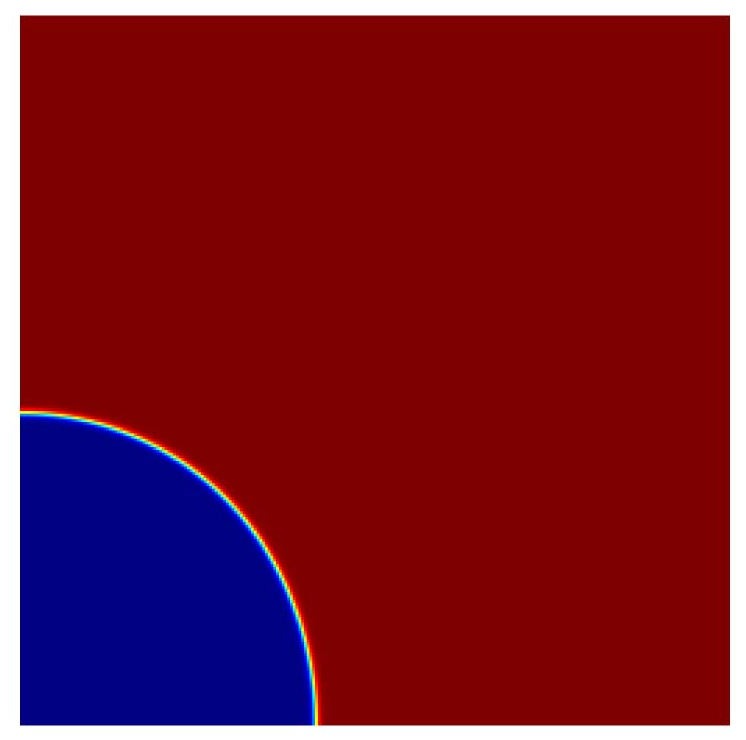}}
\caption{Snapshots of the simulated phase structures around the times $t=500$, $1000$, $5000$, $10000$, $13000$, and $15000$ produced by the CN scheme \eqref{CN_2} with the time adaptive strategy \eqref{adp} for the grain coarsening  problem with the  degenerate mobility.}
\label{fig2_1}
\end{figure*}

\begin{figure*}[!htbp]
\begin{minipage}[t]{0.325\linewidth}
\centerline{\includegraphics[scale=0.26]{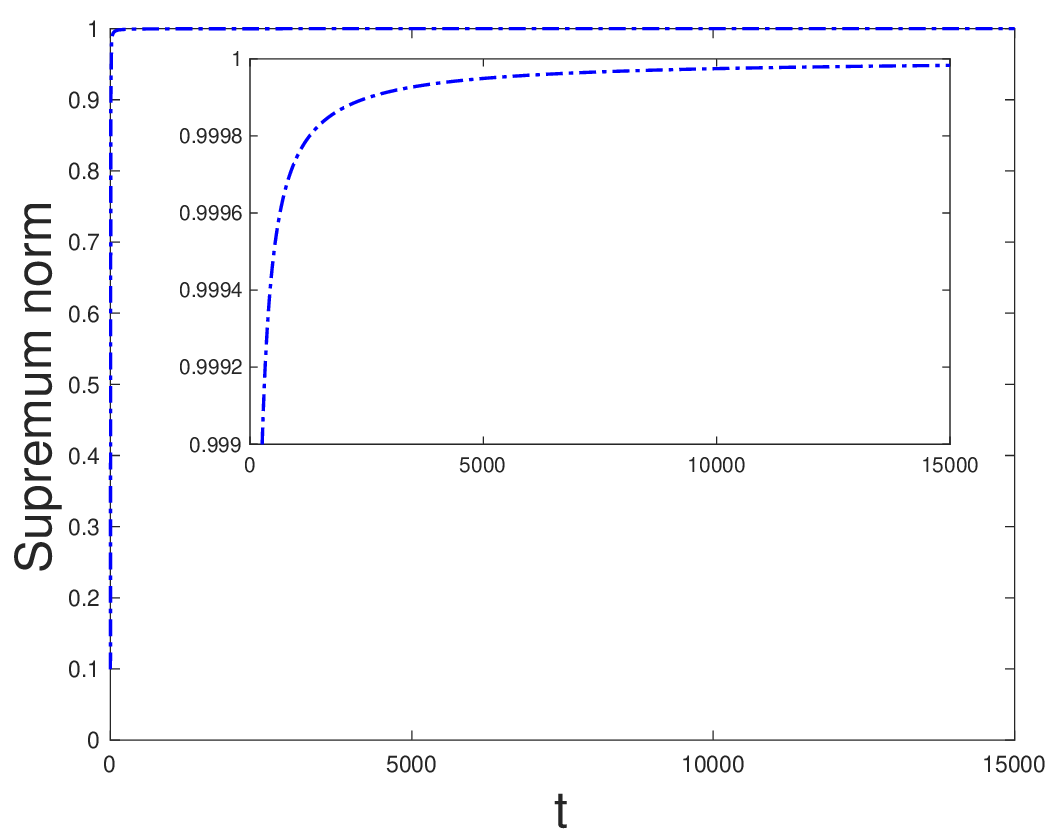}}
\centerline{(a) the supremum norm}
\end{minipage}
\begin{minipage}[t]{0.325\linewidth}
\centerline{\includegraphics[scale=0.26]{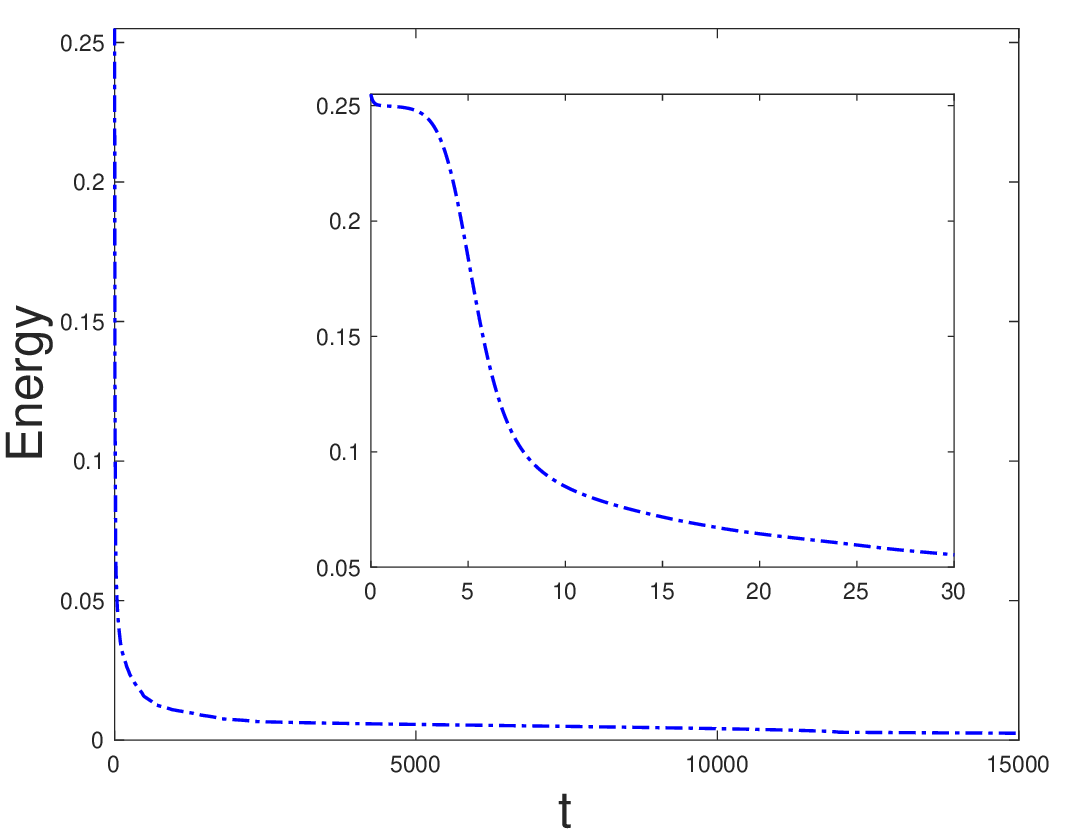}}
\centerline{(b) the energy }
\end{minipage}
\begin{minipage}[t]{0.325\linewidth}
\centerline{\includegraphics[scale=0.26]{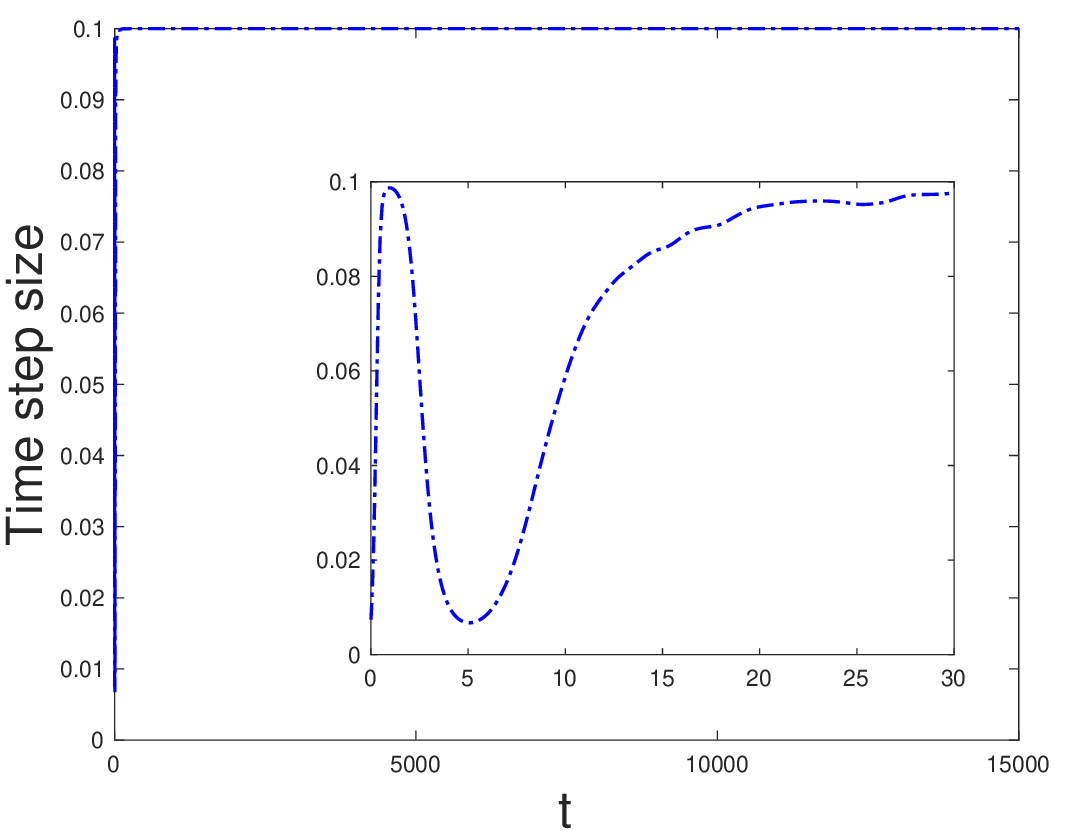}}
\centerline{(c) the time step size }
\end{minipage}
\caption{The evolutions in time of the supremum norm, the energy, and the time step sizes produced by the CN scheme \eqref{CN_2} with the time adaptive strategy \eqref{adp}   for the grain coarsening  problem  \eqref{adp}  with the degenerate mobility.}
\label{fig2_2}
\end{figure*}

\paragraph*{\bf The shrinking bubble problem}
We next use the shrinking bubble problem \cite{Chen98} to test the performance of the proposed CN scheme \eqref{CN_2} again with the time adaptive strategy \eqref{adp}.
The shrinking bubble problem is driven by the Allen-Cahn equation \eqref{prob} with $M(\phi)\equiv1$ and $\varepsilon=0.01$ in a rectangular domain $(-0.5,0.5)^2$. The initial bubble is a sphere of radius $R_{0}=0.2$ located at the center of the computational domain, given by
\beq
\phi_{0}(\x)=\begin{cases}
\begin{array}{r@{}l}
1,&\quad |\x|^2<0.2^{2},\\[1pt]
-1,&\quad |\x|^2\geq0.2^{2}.
\end{array}
\end{cases}
\eeq
As stated in \cite{JZZD15,HAX19,CGJMP19}, such a bubble is not stable and will shrink and finally disappear due the interface driving force.
Moreover, with assumption of a sufficient small $\varepsilon$, the radius of the circle at time t can be approximately expressed as follows
\bq\label{test_1}
R(t)=\sqrt{R^{2}_{0}-2\varepsilon^{2}t}.
\eq

We perform the simulation by combining the CN scheme \eqref{CN_2} with $h=1/512$ and the time adaptive strategy \eqref{adp} with $\tau_{min}=10^{-5}$, $\tau_{max}=0.01$, and $\alpha=10^{5}$.
Several snapshots of the computed bubble at the times $t =0,20,80,120,180, 200$ are plotted in {\color{black}Fig. \ref{fig3_1}}, showing that the bubble disappears at $t=200$ as predicted.
Moreover, it is observed in Fig. \ref{fig3} (a) that the simulated radius of the bubble is monotonously decreasing with a rate almost identical to the theoretical prediction \eqref{test_1}.
Furthermore, we present the evolution of the supremum norm of the numerical solutions  along with the time in Fig. \ref{fig3}-(b), which shows the MBP preservation of the proposed CN scheme \eqref{CN_2} during the whole simulation.
In the last line of Fig. \ref{fig3}, we plot the evolutions of the energy and the adaptive time step sizes to numerically demonstrate  the energy dissipation and  the efficiency of the proposed CN scheme \eqref{CN_2} adopted with the time adaptive strategy \eqref{adp}.

\begin{figure}[!htbp]
\centerline{\includegraphics[scale=0.3]{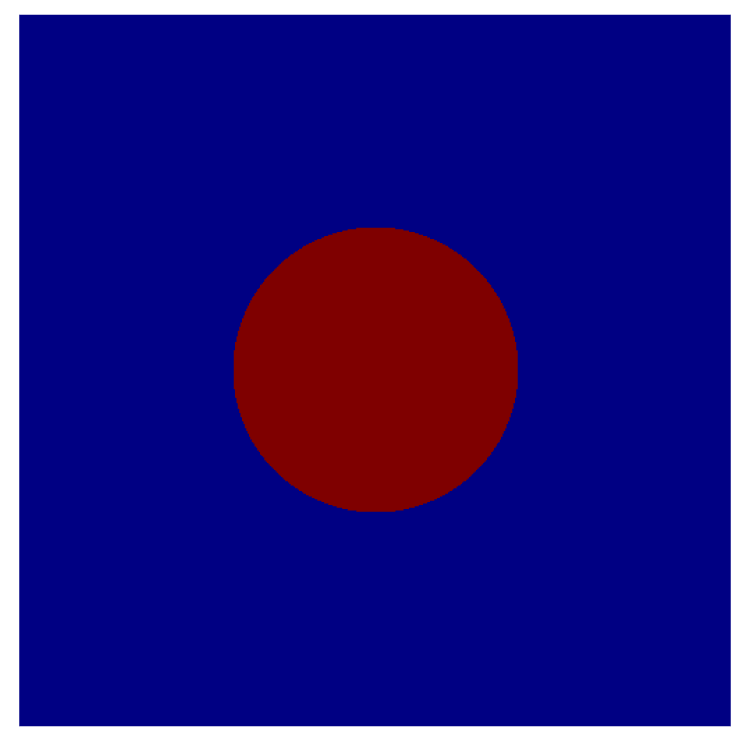}\includegraphics[scale=0.3]{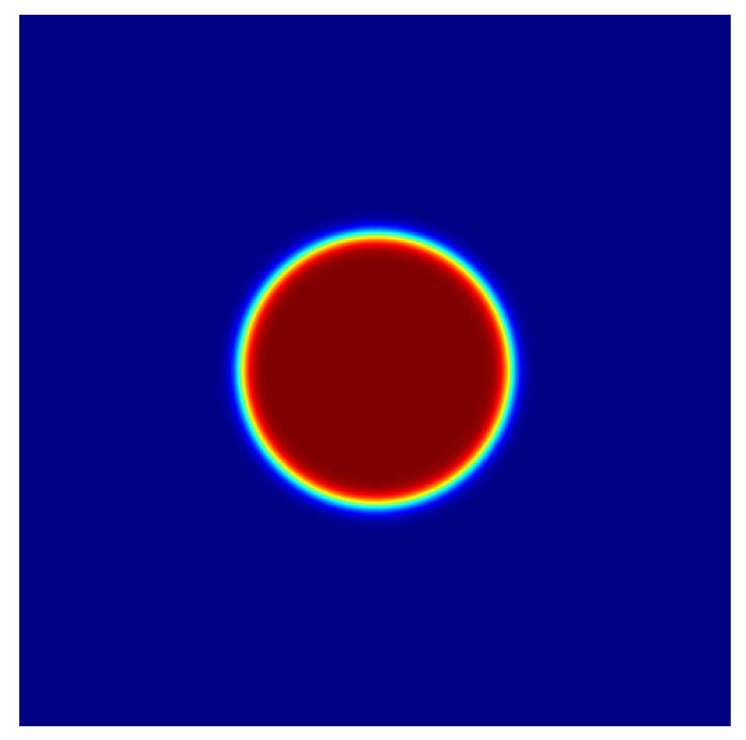}\includegraphics[scale=0.3]{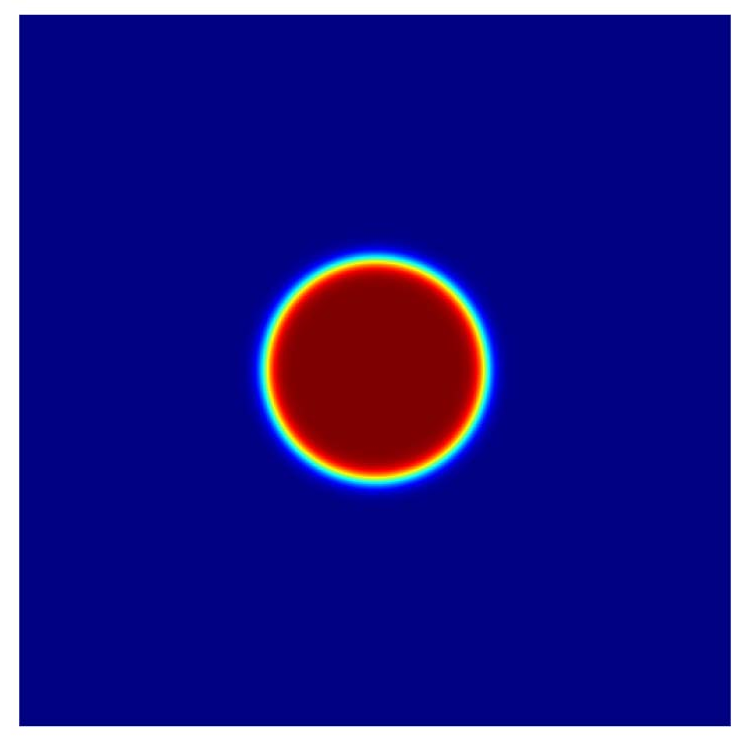}}
\centerline{\includegraphics[scale=0.3]{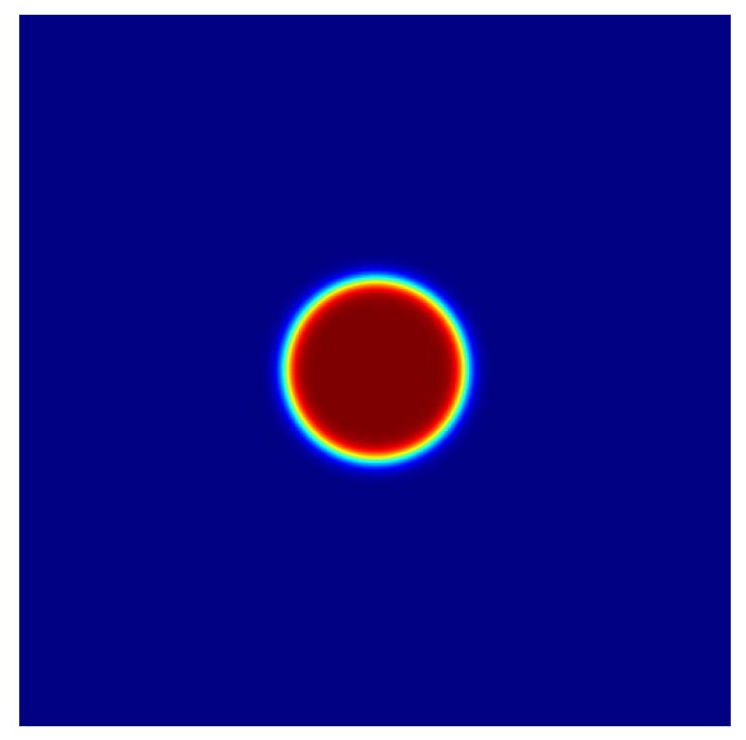}\includegraphics[scale=0.3]{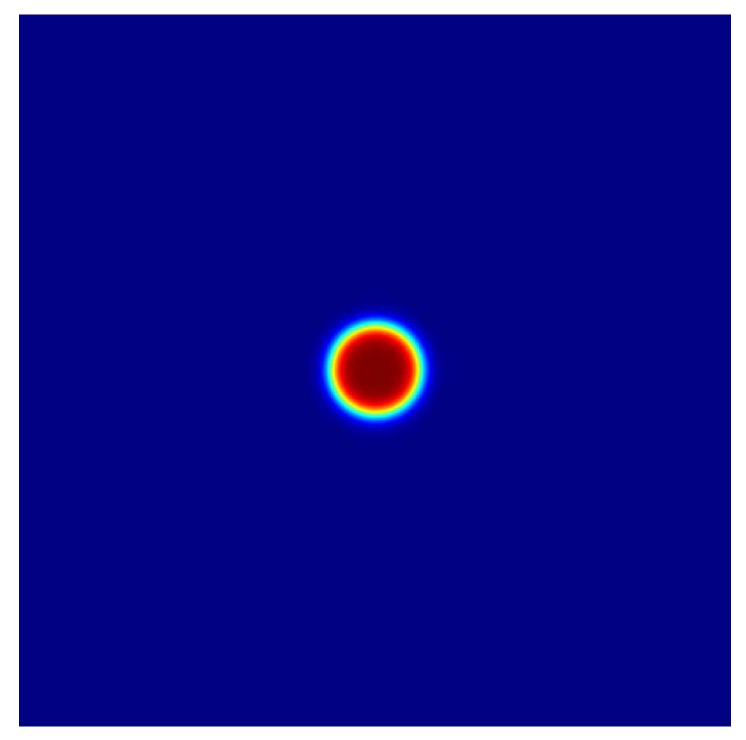}\includegraphics[scale=0.3]{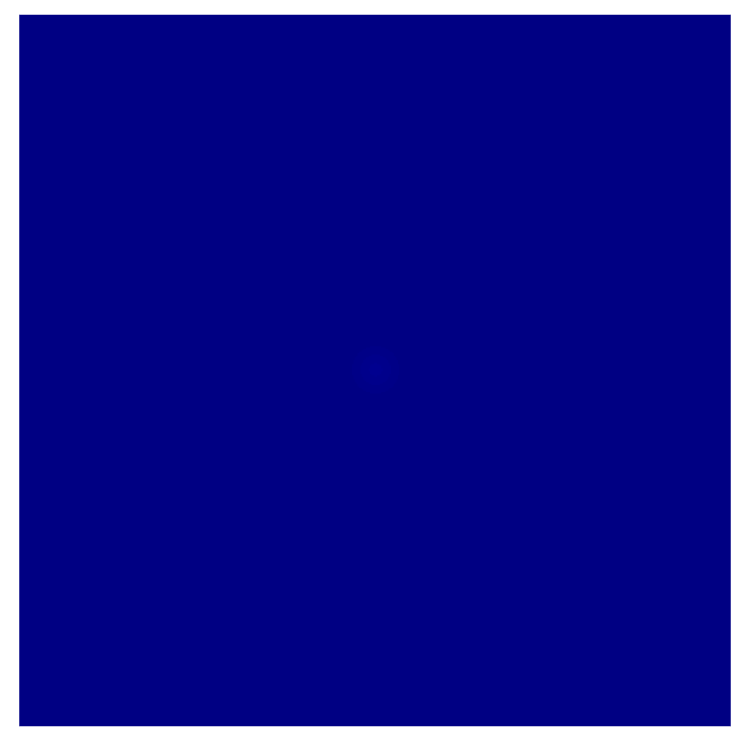}}
\caption{Snapshots of the simulated phase structures at the times $t = 0$, $20$, $80$, $120$, $180$,  and $200$ produced by the CN scheme \eqref{CN_2} for with the time adaptive strategy \eqref{adp}  for the shrinking bubble problem.}
\label{fig3_1}
\end{figure}

\begin{figure*}[!htbp]
\begin{minipage}[t]{0.45\linewidth}
\centerline{\includegraphics[scale=0.36]{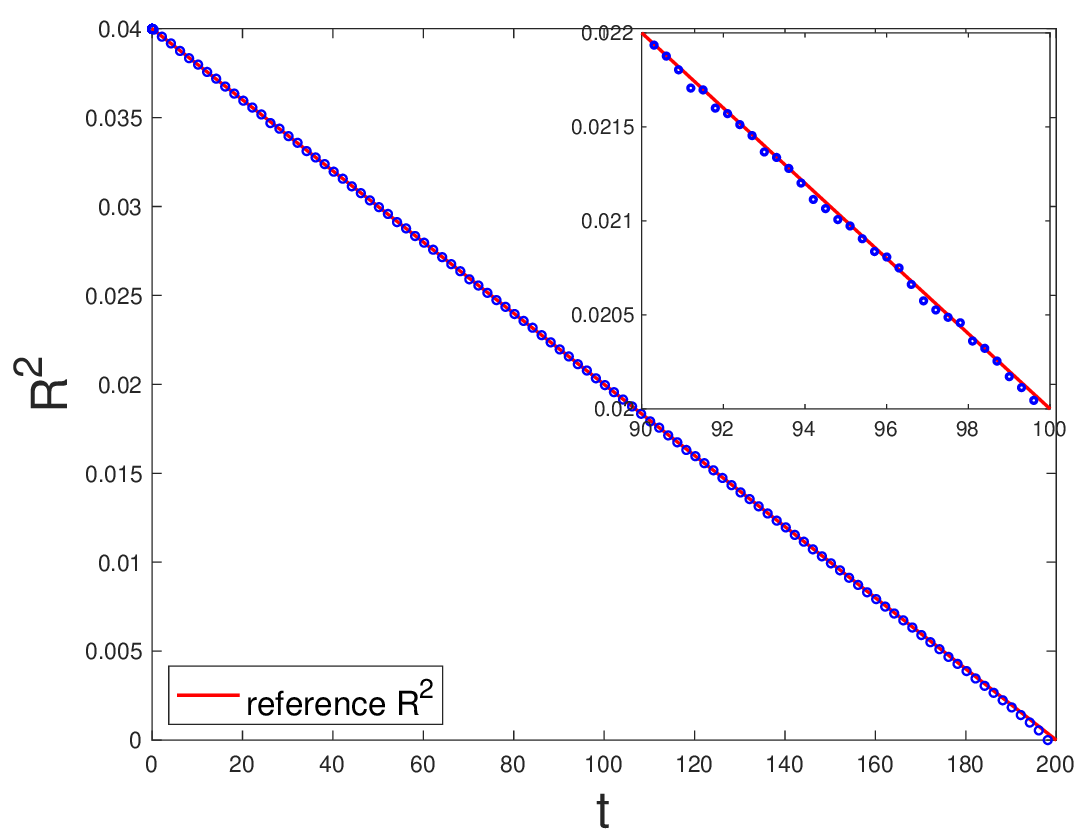}}
\centerline{(a) the radius  }
\end{minipage}
\begin{minipage}[t]{0.45\linewidth}
\centerline{\includegraphics[scale=0.36]{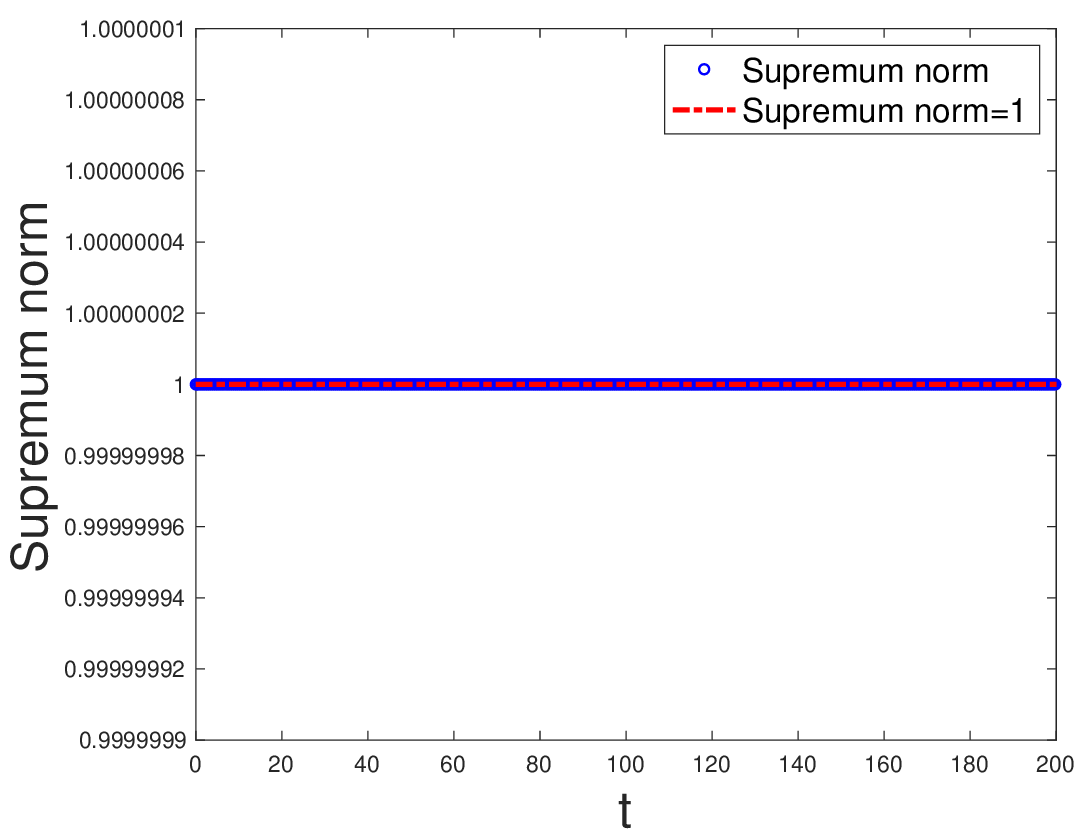}}
\centerline{(b) the supremum norm}
\end{minipage}
\vskip 3mm
\begin{minipage}[t]{0.45\linewidth}
\centerline{\includegraphics[scale=0.36]{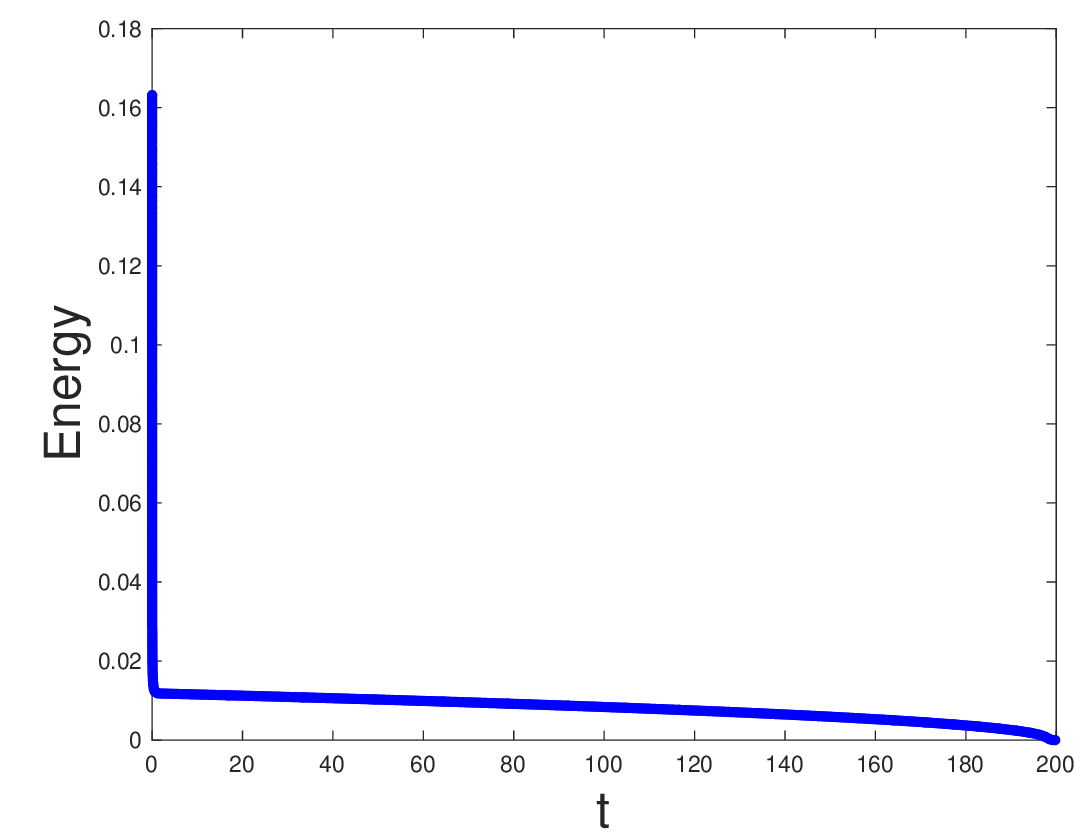}}
\centerline{(c) the energy}
\end{minipage}
\begin{minipage}[t]{0.45\linewidth}
\centerline{\includegraphics[scale=0.36]{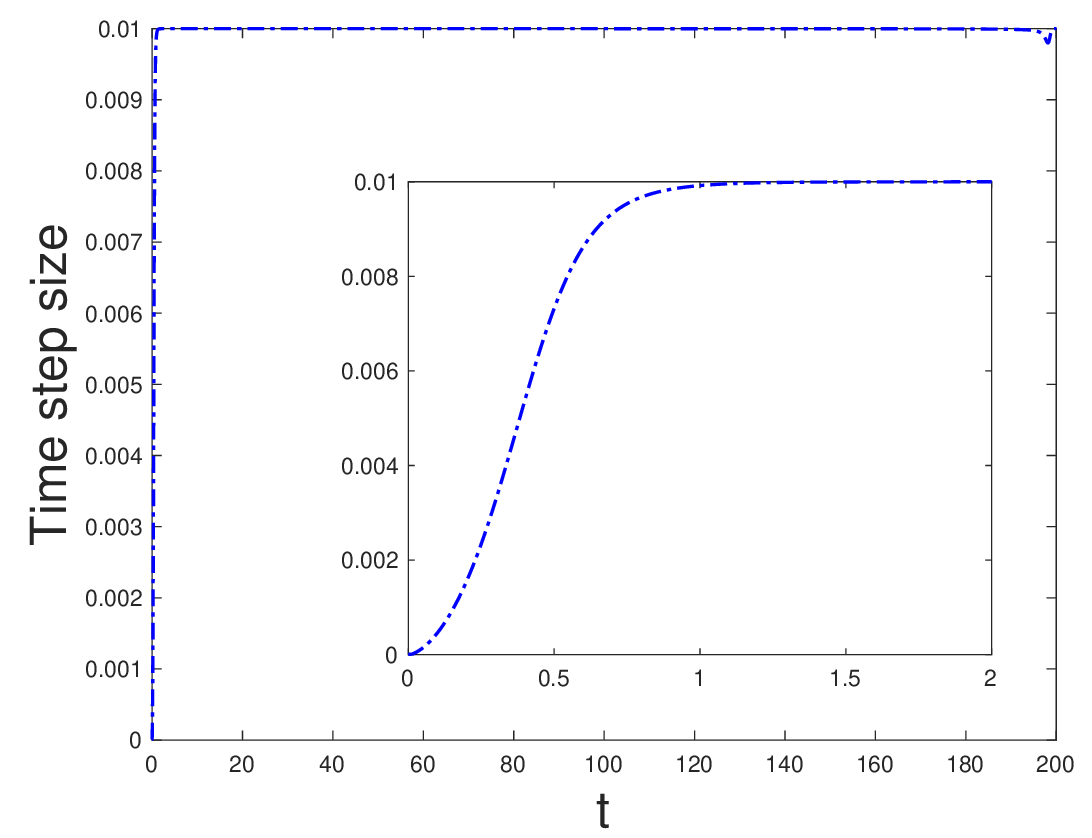}}
\centerline{(d) the time step size}
\end{minipage}
\caption{The evolutions in time of the radius, the supremum norm, and the energy of the simulated solution  and the time step size produced by the CN scheme \eqref{CN_2}  with the adaptive time stepping approach \eqref{adp} for the shrinking bubble problem.}\label{fig3}
\end{figure*}

\section {Concluding remarks}
A linear doubly stabilized CN scheme is constructed for the Allen-Cahn equation with general mobility in this paper.
The resulting fully-discrete system is formed by applying the central finite difference method for spatial discretization, and requires two Poisson-type equations  to be solved at each time step.
Two stabilizing terms are introduced to unconditionally preserve the discrete MBP of the proposed scheme. The discrete $H^{1}$ and $L^{\infty}$ error estimates are rigorously derived for the constant mobility case and the general one, respectively. Furthermore, the corresponding energy stability of the proposed scheme is also established for both cases.
Finally, a series of numerical experiments were carried out to verify the theoretical claims and illustrate the efficiency of the doubly stabilized CN scheme with a time adaptive strategy.
It remains interest to further theoretically explore the unconditional energy dissipation preservation of the proposed scheme, which has been numerically observed in our numerical experiment.

\section*{Acknowledgments}
The work of D. Hou is partially supported by the National Science Foundation of China grant 12001248,  the National Science Foundation of Jiangsu Province grant
BK20201020,  Jiangsu Province Universities  Science Foundation grant 20KJB110013 and  the Hong Kong Polytechnic University grant 1-W00D;
L. Ju's work is partially supported by US National Science Foundation grant DMS-2109633;
Z. Qiao‘s work   is partially supported by the Hong Kong Research Grants Council RFS grant RFS2021-5S03 and GRF grant 15303121, the Hong Kong Polytechnic University internal grant 1-9BCT, and CAS AMSS-PolyU Joint Laboratory of Applied Mathematics.

\bibliographystyle{plain}
\bibliography{ref}
\end{document}